\title{Bootstrapping and Askey-Wilson polynomials}
\author{Jang Soo Kim}
\address[Jang Soo Kim]{
Department of Mathematics, Sungkyunkwan University, Suwon 440-746,
South Korea}
\email{jangsookim@skku.edu}
\author{Dennis Stanton}
\address[Dennis Stanton]{School of Mathematics, University of Minnesota, Minneapolis, MN 55455}
\email{stanton@math.umn.edu}
\date{\today}
\thanks{The first author was partially supported by Basic Science
  Research Program through the National Research Foundation of Korea
  (NRF) funded by the Ministry of Education (NRF-2013R1A1A2061006).
  The second author was supported by NSF grant DMS-1148634}
\subjclass[2000]{Primary: 05E35; Secondary: 05A30, 05A15}
\keywords{orthogonal polynomials, Askey-Wilson polynomials, Hermite
  polynomials, moments}
\newtheorem{thm}{Theorem}[section]
\newtheorem{lem}[thm]{Lemma}
\newtheorem{prop}[thm]{Proposition}
\newtheorem{cor}[thm]{Corollary}
\theoremstyle{definition}
\newtheorem{example}[thm]{Example}
\newtheorem{defn}[thm]{Definition}
\newtheorem{remark}[thm]{Remark}
\newcommand\flr[1]{\left\lfloor #1\right\rfloor}
\newcommand\Qbinom[3]{\genfrac{[}{]}{0pt}{}{#1}{#2}_{#3}}
\newcommand\qbinom[2]{\Qbinom{#1}{#2}{q}}
\newcommand\LL{\mathcal{L}}
\newcommand\CM{\mathcal{CM}}
\newcommand\abcd[1]{\alpha#1\beta#1\gamma#1\delta}
\newcommand\abcdpower{a^\alpha b^\beta c^\gamma d^\delta}
\newcommand\op{\overline P}
\newcommand\HT{\tilde{h}}
\newcommand\inv{\operatorname{inv}}
\newcommand\wt{\operatorname{wt}}
\newcommand\fbm{\mathcal{FBM}}
\newcommand\bw{\mathrm{bw}}
\newcommand\Mot{\operatorname{Mot}}
\newcommand\cro{\operatorname{cr}}
\newcommand\nes{\operatorname{ne}}
\newcommand\ali{\operatorname{al}}
\def\JV.{Josuat-Verg\`es}
\newcommand{\gauss}[2]{\genfrac{[}{]}{0pt}{}{#1}{#2}_q}
\newcommand\hyper[5]{
  {}_{#1}\phi_{#2} \left( \left.
    \begin{matrix}
      #3\\
      #4\\
    \end{matrix}
    \right| #5
    \right)
}
\begin{document}

\begin{abstract} The mixed moments for the Askey-Wilson polynomials
are found using a bootstrapping method and connection coefficients. A similar 
bootstrapping idea on generating functions gives a new Askey-Wilson generating function. 
An important special case of this hierarchy is a polynomial which satisfies a four 
term recurrence, and its combinatorics is studied.
\end{abstract}

\maketitle

\section{Introduction}

The Askey-Wilson polynomials \cite{AskeyWilson} 
$p_n(x;a,b,c,d|q)$ are orthogonal polynomials in $x$ 
which depend upon five parameters: $a$, $b$, $c$, $d$ and $q$.
In \cite[\S2]{BI} Berg and Ismail use a bootstrapping method to prove 
orthogonality of Askey-Wilson polynomials by initially starting 
with the orthogonality of the $a=b=c=d=0$ case, the continuous 
$q$-Hermite polynomials, and successively proving more general 
orthogonality relations, adding parameters along the way.

In this paper we implement this idea in two different ways. First, using 
successive connection coefficients for two sets of orthogonal polynomials, we 
will find explicit formulas for generalized moments of Askey-Wilson
polynomials, see Theorem~\ref{thm:xnP}. 
This method also gives a heuristic for a relation between the two measures 
of the two polynomial sets, see Remark~\ref{remark:heur}, which is correct for the 
Askey-Wilson hierarchy. Using this idea we give a new generating 
function (Theorem~\ref{thm:dual_q_Hahn}) for Askey-Wilson polynomials when $d=0.$

The second approach is to assume the two sets of polynomials have generating 
functions which are closely related, up to a $q$-exponential factor. We prove in 
Theorem~\ref{thm:main} that if one set is an orthogonal set, 
the second set has a recurrence relation of predictable order, which may 
be greater than three. We give several examples using the 
Askey-Wilson hierarchy.

Finally we consider a more detailed example of the second approach,
using a generating function to define a set of polynomials called the
discrete big $q$-Hermite polynomials. These polynomials satisfy a
4-term recurrence relation. We give the moments for the pair of
measures for their orthogonality relations. Some of the combinatorics
for these polynomials is given in \S~\ref{sec:comb-discr-big}.
Finally we record in Proposition~\ref{prop:addthm} a possible
$q$-analogue of the Hermite polynomial addition theorem.

We shall use basic hypergeometric notation, which is in Gasper-Rahman
\cite{GR} and Ismail \cite{Is}.

\section{Askey-Wilson polynomials and connection coefficients}
\label{sec:comp-line-funct}

The connection coefficients are defined as the constants obtained when 
one expands one set of polynomials in terms of another set of polynomials. 

For the Askey-Wilson polynomials \cite[15.2.5,~p.~383]{Is}
\[
p_n(x;a,b,c,d|q)= \frac{(ab,ac,ad)_n}{a^n}
\hyper43{q^{-n},abcdq^{n-1},ae^{i\theta},ae^{-i\theta}}
{ab,ac,ad}{q;q}, \quad x=\cos\theta
\]
we shall use the connection coefficients 
obtained by successively adding a parameter
\[
(a,b,c,d)=(0,0,0,0)\rightarrow (a,0,0,0)\rightarrow (a,b,0,0)\rightarrow (a,b,0,0)\rightarrow (a,b,c,0)
\rightarrow (a,b,c,d).
\]
Using a simple general result on orthogonal polynomials, we derive an almost 
immediate proof of an explicit formula for the mixed moments of Askey-Wilson polynomials.

First we set the notation for an orthogonal polynomial set $p_n(x).$ Let 
$\LL_p$ be the linear functional on polynomials for which orthogonality holds 
\[
\LL_p(p_m(x)p_n (x)) =h_n \delta_{mn}, \quad 0\le m,n.
\]
\begin{defn}
The mixed moments of $\LL_p$ are $\LL_p(x^np_m(x)),\quad  0\le m,n.$
\end{defn}

The main tool is the following Proposition, which allows the computation 
of mixed moments of one set of orthogonal polynomials from another set if the 
connection coefficients are known.


\begin{prop}
\label{prop:bootstrap}
  Let $R_n(x)$ and $S_n(x)$ be orthogonal polynomials with linear functionals
  $\LL_R$ and $\LL_S$, respectively, such that $\LL_R(1)=\LL_S(1) = 1$.
Suppose that the connection coefficients are
\begin{equation}
  \label{eq:conncoef1}
R_k(x) = \sum_{i=0}^k c_{k,i} S_i(x).  
\end{equation}
Then
\[
\LL_S(x^n S_m(x)) = \sum_{k=0}^n \frac{\LL_R(x^n R_k(x))}{\LL_R(R_k(x)^2)}
c_{k,m} \LL_S(S_m(x)^2). 
\]
\end{prop}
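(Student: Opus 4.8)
The plan is to compute $\LL_S(x^n S_m(x))$ by first expanding $x^n$ in the $R$-basis, since we have information about the $R$-functional and the connection coefficients go from $R$ to $S$. Write $x^n = \sum_{k=0}^n a_{n,k} R_k(x)$ for some coefficients $a_{n,k}$. The natural way to extract each $a_{n,k}$ is to apply $\LL_R$ after multiplying by $R_k(x)$: by orthogonality of the $R_n$, we get $\LL_R(x^n R_k(x)) = a_{n,k} \LL_R(R_k(x)^2)$, so that $a_{n,k} = \LL_R(x^n R_k(x))/\LL_R(R_k(x)^2)$. This already produces the ratio appearing in the claimed formula, so the first key step is simply this orthogonal expansion of the monomial.

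Next I would substitute the connection coefficient relation \eqref{eq:conncoef1} into this expansion to rewrite $x^n$ in the $S$-basis. Replacing each $R_k(x)$ by $\sum_{i=0}^k c_{k,i} S_i(x)$ gives
\[
x^n = \sum_{k=0}^n \frac{\LL_R(x^n R_k(x))}{\LL_R(R_k(x)^2)} \sum_{i=0}^k c_{k,i} S_i(x).
\]
The final step is to apply $\LL_S(\,\cdot\, S_m(x))$ to both sides. On the left this yields exactly the target $\LL_S(x^n S_m(x))$. On the right, the orthogonality of the $S_n$ collapses the inner sum over $i$: $\LL_S(S_i(x) S_m(x)) = \delta_{im}\LL_S(S_m(x)^2)$, so only the $i=m$ term survives, and the constraint $i \le k$ together with $k \le n$ forces the surviving $k$ to range over $m \le k \le n$. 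This reproduces the stated right-hand side $\sum_{k} \frac{\LL_R(x^n R_k(x))}{\LL_R(R_k(x)^2)} c_{k,m}\LL_S(S_m(x)^2)$, where terms with $k < m$ vanish because $c_{k,m}=0$ for $m>k$, so the sum may be written from $k=0$ to $n$ as in the statement.

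There is essentially no hard analytic obstacle here; the argument is a two-step change of basis glued together by orthogonality. The one point requiring mild care is the interchange and truncation of the two finite sums, and the bookkeeping on the index ranges — ensuring that the degree constraints $i \le k \le n$ are compatible with the Kronecker delta picking out $i=m$, and that the normalization $\LL_R(1)=\LL_S(1)=1$ (which guarantees $R_0=S_0=1$ and keeps the constant terms consistent) is used correctly. Since all sums are finite, no convergence or justification-of-interchange issues arise, so I expect the proof to be short and entirely formal.
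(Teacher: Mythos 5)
Your proof is correct and is essentially the paper's own argument: both expand $x^n$ in the $R$-basis with coefficients $\LL_R(x^n R_k(x))/\LL_R(R_k(x)^2)$, use the connection coefficients to pass to the $S$-basis, and collapse the sum via orthogonality of the $S_n$ under $\LL_S(\,\cdot\,S_m(x))$. The only difference is the order in which the two substitutions are carried out, which is immaterial.
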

\begin{proof}
If we multiply both sides of \eqref{eq:conncoef1} by $S_m(x)$ and apply $\LL_S$, 
we have
\[
\LL_S(R_k(x)S_m(x)) = c_{k,m} \LL_S(S_m(x)^2).
\]
Then by expanding $x^n$ in terms of $R_k(x)$ 
\[
x^n=\sum_{k=0}^n \frac{\LL_R(x^n R_k(x))}{\LL_R(R_k(x)^2)} R_k(x)
\]
we find
\begin{align*}
\LL_S(x^n S_m(x)) = \LL_S\left( \sum_{k=0}^n 
\frac{\LL_R(x^n R_k(x))}{\LL_R(R_k(x)^2)} R_k(x) S_m(x)\right)
= \sum_{k=0}^n \frac{\LL_R(x^n R_k(x))}{\LL_R(R_k(x)^2)}
c_{k,m} \LL_S(S_m(x)^2). 
\end{align*} 
\end{proof}

\begin{remark}
\label{remark:heur}
One may also use the idea of Proposition~\ref{prop:bootstrap} to give a heuristic for representing 
measures of the linear functionals.  Putting $m=0,$ if representing measures were absolutely continuous,
say $w_R(x)dx$ for $R_n(x)$, and $w_S(x)dx$ for $S_n(x)$ then one might guess that 
\[
w_S(x) =  w_R(x) \sum_{k=0}^\infty \frac{R_k(x)}{\LL_R(R_k(x)^2)} c_{k,0}.
\]
\end{remark}

For the rest of this section we will compute the mixed moments
$\LL_p(x^n p_m(x))$ for the Askey-Wilson polynomials using
Proposition~\ref{prop:bootstrap} starting from the $q$-Hermite
polynomials.

Let $\LL_{a,b,c,d}$ be the linear functional for $p_n(x;a,b,c,d|q)$
satisfying $\LL_{a,b,c,d}(1)=1$. Then $\LL=\LL_{0,0,0,0}$, $\LL_{a}=\LL_{a,0,0,0}$,
$\LL_{a,b}=\LL_{a,b,0,0}$, and $\LL_{a,b,c}=\LL_{a,b,c,0}$
are the linear functionals for these polynomials:
 $q$-Hermite, $H_n(x|q)=p_n(x;0,0,0,0|q)$,
the big $q$-Hermite $H_n(x;a|q)=p_n(x;a,0,0,0|q)$, 
the Al-Salam-Chihara $Q_n(x;a,b|q)=p_n(x;a,b,0,0|q)$, and
the dual $q$-Hahn $p_n(x;a,b,c|q)=p_n(x;a,b,c,0|q)$.

The $L^2$-norms are given by \cite[15.2.4~p.383]{Is}
\begin{align}
\label{eq:orth_hermit}
\LL(H_n(x|q) H_m(x|q)) &= (q)_n\delta_{mn},\\
\label{eq:orth_bighermit}
\LL_a(H_n(x;a|q) H_m(x;a|q)) &= (q)_n\delta_{mn},\\
\label{eq:orth_ASC}
\LL_{a,b}(Q_n(x;a,b|q) Q_m(x;a,b|q)) &= (q,ab)_n\delta_{mn},\\
\label{eq:orth_dual_q_Hahn}
\LL_{a,b,c}(p_n(x;a,b,c|q) p_m(x;a,b,c|q)) &= (q,ab,ac,bc)_n\delta_{mn},\\
\label{eq:orth_AW}
\LL_{a,b,c,d}(p_n(x;a,b,c,d|q) p_m(x;a,b,c,d|q)) &= 
\frac{(q,ab,ac,ad,bc,bd,cd,abcdq^{n-1})_n}{(abcd)_{2n}}\delta_{mn}.
\end{align}

To apply Proposition~\ref{prop:bootstrap}, we need the following connection 
coefficient formula for the Askey-Wilson polynomials given in \cite[(6.4)]{AskeyWilson}
\begin{equation}
  \label{eq:cc}
\frac{p_n(x;A,b,c,d|q)}{(q,bc,bd,cd)_n}
=\sum_{k=0}^n \frac{p_k(x;a,b,c,d|q)}{(q,bc,bd,cd)_k}
\times\frac{a^{n-k}(A/a)_{n-k}(A bcdq^{n-1})_k}
{(abcdq^{k-1})_k (q,abcdq^{2k})_{n-k}}.
\end{equation}

The following four identities are special cases of \eqref{eq:cc}:
\begin{align}
\label{eq:cc0}
H_n(x|q) &= \sum_{k=0}^n \qbinom{n}{k} H_k(x;a|q) a^{n-k},\\
\label{eq:cca}
H_n(x;a|q) &=\sum_{k=0}^n \qbinom nk Q_k(x;a,b|q) b^{n-k},\\
\label{eq:ccab}
Q_n(x;a,b|q) &=(ab)_n \sum_{k=0}^n \qbinom nk \frac{p_k(x;a,b,c|q)}{(ab)_k} c^{n-k},\\
\frac{p_n(x;b,c,d|q)}{(q,bc,bd,cd)_n}
\label{eq:ccabc}
&=\sum_{k=0}^n \frac{p_k(x;a,b,c,d|q)}{(q,bc,bd,cd)_k}
\cdot\frac{a^{n-k}}
{(abcdq^{k-1})_k (q,abcdq^{2k})_{n-k}}.
\end{align}

For the initial mixed moment we need the following result proved independently by
\JV. \cite[Proposition 5.1]{JV_rook} and Cigler \cite[Proposition
15]{Cigler2011}
\[
\LL(x^n H_m(x;q)) = \frac{(q)_m}{2^n} \op(n,m),
\]
where
\[
\overline P(n,m) = 
\sum_{k=m}^n \left( \binom{n}{\frac{n-k}2} - \binom{n}{\frac{n-k}2-1}\right)
(-1)^{(k-m)/2} q^{\binom{(k-m)/2+1}2} 
\qbinom{\frac{k+m}2}{\frac{k-m}2}.
\]

We shall use the convention $\binom nk = \qbinom nk = 0$ if $k<0$,
$k>n$, or $k$ is not an integer. Thus $\overline P(n,m)=0$ if
$n\not\equiv m \mod 2$.

\begin{thm}
\label{thm:xnP}
We have 
\begin{align}
\label{eq:big}
\LL_a(x^n H_m(x;a|q)) &= \frac{(q)_m}{2^n}\sum_{\alpha\ge0} \op(n,\alpha+m)
\qbinom{\alpha+m}{m} a^{\alpha}, \\
\label{eq:ASC}
\LL_{a,b}(x^n Q_m(x;a,b|q)) &= \frac{(q,ab)_m}{2^n} 
\sum_{\alpha,\beta\ge0} \op(n,\alpha+\beta+m) 
\qbinom{\alpha+\beta+m}{\alpha,\beta,m} a^{\alpha} b^{\beta}, \\
\label{eq:dualqHahn}
\LL_{a,b,c}(x^n p_m(x;a,b,c|q))
&= \frac{(q,ac,bc)_m}{2^n} \sum_{\alpha,\beta,\gamma\ge0} 
\op(n,\alpha+\beta+\gamma+m) 
\qbinom{\alpha+\beta+\gamma+m}{\alpha,\beta,\gamma,m} \\
\notag &\quad \times a^{\alpha} b^{\beta} c^{\gamma} (ab)_{\gamma+m},\\ 
\label{eq:AW}
\LL_{a,b,c,d}(x^n p_m(x;a,b,c,d|q)) &=
\frac{1}{2^n}\sum_{\abcd,\ge0}\abcdpower \op(n,\abcd+) \qbinom{\abcd+}{\abcd,} \\
\notag &\quad \times \frac{(bd)_{\alpha}(cd)_{\alpha}(bc)_{\alpha+\delta}}{(abcd)_\alpha}
\cdot \frac{(ab,ac,ad)_m (q^{\alpha};q^{-1})_m}{a^m (abcdq^{\alpha})_m}.
  \end{align}
\end{thm}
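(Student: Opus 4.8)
The plan is to apply Proposition~\ref{prop:bootstrap} four times in succession, climbing the chain $q$-Hermite $\to$ big $q$-Hermite $\to$ Al-Salam-Chihara $\to$ dual $q$-Hahn $\to$ Askey-Wilson, using at each step the connection coefficients and norms already assembled above. The base case is the formula $\LL(x^n H_m(x;q))=\frac{(q)_m}{2^n}\op(n,m)$ quoted just before the theorem. At each step I take $R_n$ to be the lower polynomial and $S_n$ the upper one, read off $c_{k,m}$ from one of \eqref{eq:cc0}--\eqref{eq:ccabc}, supply the two $L^2$-norms from \eqref{eq:orth_hermit}--\eqref{eq:orth_AW}, and feed in the mixed-moment formula already proved for $\LL_R(x^nR_k)$. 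Thus \eqref{eq:big} comes from the base case via \eqref{eq:cc0}, then \eqref{eq:ASC} from \eqref{eq:big} via \eqref{eq:cca}, then \eqref{eq:dualqHahn} from \eqref{eq:ASC} via \eqref{eq:ccab}, and finally \eqref{eq:AW} from \eqref{eq:dualqHahn} via \eqref{eq:ccabc}.

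In every step the mechanics repeat. The norm $\LL_R(R_k(x)^2)$ in the denominator of Proposition~\ref{prop:bootstrap} cancels against a matching $k$-dependent factor --- the prefactor of the inductive formula in the first three steps, and the leading factor $(q,bc,bd,cd)_k$ of \eqref{eq:ccabc} in the last --- and since $c_{k,m}=0$ for $k<m$ the outer sum effectively runs over $k\ge m$. Reindexing this outer variable then merges the ordinary $q$-binomials into $q$-multinomials through $\qbinom{\alpha+\beta+m}{\beta+m}\qbinom{\beta+m}{m}=\qbinom{\alpha+\beta+m}{\alpha,\beta,m}$ and its higher analogues; this is the only bookkeeping in the first three steps. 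One point to watch in the last step is that \eqref{eq:ccabc} expands the dual $q$-Hahn polynomial $p_n(x;b,c,d|q)$ with parameters $(b,c,d)$, so the inductive input is \eqref{eq:dualqHahn} after the relabeling $(a,b,c)\mapsto(b,c,d)$; this produces the factors $(bd)_\alpha$, $(cd)_\alpha$, and $(bc)_{\alpha+\delta}$ of \eqref{eq:AW}, under the identification $\alpha=k$ with $(\beta,\gamma,\delta)$ the inner summation indices and the constraint $k\ge m$ encoded by the factor $(q^\alpha;q^{-1})_m$.

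I expect the $q$-shifted-factorial simplification in the fourth step to be the main obstacle, since \eqref{eq:ccabc} and the Askey-Wilson norm \eqref{eq:orth_AW} introduce $(abcdq^{k-1})_k$, $(q,abcdq^{2k})_{n-k}$, and $(abcd)_{2n}$. Two clean reductions should do all the work. First, the $m$-dependent prefactor pulled out of the $k$-sum, namely the norm $\LL_{a,b,c,d}(p_m^2)$ from \eqref{eq:orth_AW} divided by $(q,bc,bd,cd)_m(abcdq^{m-1})_m$, collapses to $(ab,ac,ad)_m/(abcd)_{2m}$, accounting for the $(ab,ac,ad)_m$ in \eqref{eq:AW}. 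Second, the surviving $\alpha$-dependent factors are handled by the two elementary identities
\[
\frac{(q)_\alpha}{(q)_{\alpha-m}}=(q^\alpha;q^{-1})_m,\qquad (abcd)_\alpha(abcdq^\alpha)_m=(abcd)_{\alpha+m}=(abcd)_{2m}(abcdq^{2m})_{\alpha-m},
\]
which together turn the remaining ratio into the factor $(q^\alpha;q^{-1})_m/\bigl(a^m(abcdq^\alpha)_m\bigr)$ recorded in \eqref{eq:AW}. Everything else is routine cancellation, so once these two reductions are in place all four formulas follow.
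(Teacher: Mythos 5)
Your proposal is correct and takes essentially the same route as the paper, which applies Proposition~\ref{prop:bootstrap} with \eqref{eq:cc0} and \eqref{eq:orth_hermit} to obtain \eqref{eq:big} and then states that \eqref{eq:ASC}, \eqref{eq:dualqHahn}, and \eqref{eq:AW} follow similarly from \eqref{eq:cca}, \eqref{eq:ccab}, and \eqref{eq:ccabc}. Your bookkeeping for the last step --- the relabeling $(a,b,c)\mapsto(b,c,d)$, the identification $\alpha=k$ with the constraint $k\ge m$ carried by $(q^{\alpha};q^{-1})_m$, and the identities $(q)_\alpha/(q)_{\alpha-m}=(q^{\alpha};q^{-1})_m$ and $(abcd)_{2m}(abcdq^{2m})_{\alpha-m}=(abcd)_{\alpha}(abcdq^{\alpha})_m$ --- correctly fills in exactly the details the paper leaves to the reader.
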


\begin{proof}
By \eqref{eq:cc0}, Proposition~\ref{prop:bootstrap} and
  \eqref{eq:orth_hermit},
\begin{align*}
\LL_a(x^n H_m(x;a|q)) &=
\sum_{k=0}^n \frac{\LL(x^n H_k(x|q))}{\LL(H_k(x)^2)}
\qbinom{k}{m} a^{k-m}\LL_a(H_m(x;a|q)^2)\\
&= \frac{(q)_m}{2^n}\sum_{k=0}^n \op(n,k)
\qbinom{k}{m} a^{k-m}.
\end{align*}
Equations~\eqref{eq:ASC}, \eqref{eq:dualqHahn}, and \eqref{eq:AW} can
be proved similarly using the connection coefficient formulas
\eqref{eq:cca}, \eqref{eq:ccab}, and \eqref{eq:ccabc}.
\end{proof}

Letting $m=0$ in \eqref{eq:AW} we obtain a formula for the $n$th
moment of the Askey-Wilson polynomials.

\begin{cor}
\label{cor:AWmoment}
We have
\begin{equation}
  \label{eq:AWmoment2}
\LL_{a,b,c,d}(x^n) =
\frac{1}{2^n}\sum_{\abcd,\ge0}\abcdpower \op(n,\abcd+) \qbinom{\abcd+}{\abcd,} 
\frac{(bd)_{\alpha}(cd)_{\alpha}(bc)_{\alpha+\delta}}{(abcd)_\alpha}.
 \end{equation}
\end{cor}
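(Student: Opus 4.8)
The plan is to read this off directly from equation~\eqref{eq:AW} of Theorem~\ref{thm:xnP} by specializing to $m=0$; no computation beyond Theorem~\ref{thm:xnP} is required. First I would record that, with the normalization $\LL_{a,b,c,d}(1)=1$ fixed for the functional, the degree-zero Askey--Wilson polynomial is the constant $p_0(x;a,b,c,d|q)=1$ (the defining ${}_4\phi_3$ reduces to its $k=0$ term and the prefactor $(ab,ac,ad)_0/a^0$ equals $1$). Hence the left-hand side of \eqref{eq:AW} collapses: $\LL_{a,b,c,d}(x^n p_0(x;a,b,c,d|q)) = \LL_{a,b,c,d}(x^n)$, which is precisely the $\nth$ moment we are after.

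Next I would examine the $m$-dependent factor on the right-hand side of \eqref{eq:AW}, namely
\[
\frac{(ab,ac,ad)_m\,(q^{\alpha};q^{-1})_m}{a^m\,(abcdq^{\alpha})_m}.
\]
At $m=0$ every $q$-shifted factorial appearing here carries lower index $0$ and therefore equals $1$, and $a^0=1$ as well, so this entire factor reduces to $1$. The key structural point to flag is that this ratio is the \emph{only} place in \eqref{eq:AW} where $m$ appears: unlike the Al-Salam--Chihara and dual $q$-Hahn formulas \eqref{eq:ASC} and \eqref{eq:dualqHahn}, where $m$ is folded into the $q$-multinomial and into the argument of $\op$, in the Askey--Wilson case the parameter $m$ lives entirely in the factor displayed above. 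Consequently, setting $m=0$ leaves the remaining summand $\abcdpower\,\op(n,\abcd+)\,\qbinom{\abcd+}{\abcd,}\,\frac{(bd)_{\alpha}(cd)_{\alpha}(bc)_{\alpha+\delta}}{(abcd)_\alpha}$ untouched, and the index set $\alpha,\beta,\gamma,\delta\ge0$ is already independent of $m$, so no reindexing is needed.

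Combining these two observations produces exactly \eqref{eq:AWmoment2}, which completes the proof. There is no genuine obstacle: the substance resides in Theorem~\ref{thm:xnP}, and the corollary is a clean specialization. The one verification deserving a moment's attention is confirming that the full $m$-dependence of \eqref{eq:AW} is confined to the single ratio factor, so that its evaluation at $m=0$ to the value $1$ cleanly yields the stated moment formula.
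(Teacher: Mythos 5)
Your proposal is correct and is essentially identical to the paper's own argument: the paper obtains Corollary~\ref{cor:AWmoment} precisely by setting $m=0$ in \eqref{eq:AW}, which works because $p_0(x;a,b,c,d|q)=1$ and the only $m$-dependent part of \eqref{eq:AW}, the factor $(ab,ac,ad)_m (q^{\alpha};q^{-1})_m / \bigl(a^m (abcdq^{\alpha})_m\bigr)$, equals $1$ at $m=0$. Your explicit check that the $m$-dependence is confined to that single factor (unlike in \eqref{eq:ASC} and \eqref{eq:dualqHahn}) is a sound and worthwhile verification, but it does not change the route.
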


In \cite{KimStanton} the authors found a slightly different formula
\[
\LL_{a,b,c,d}(x^n)=
\frac{1}{2^n}\sum_{\abcd,\ge0}\abcdpower \op(n,\abcd+) \qbinom{\abcd+}{\abcd,} 
\frac{(ad)_{\beta+\gamma}(ac)_{\beta}(bd)_{\gamma}}{(abcd)_{\beta+\gamma}},
\]
which can be rewritten using the symmetry in $a,b,c,d$ as
\begin{equation}
  \label{eq:AWmoment1}
\LL_{a,b,c,d}(x^n)=
\frac{1}{2^n}\sum_{\abcd,\ge0}\abcdpower \op(n,\abcd+) \qbinom{\abcd+}{\abcd,} 
\frac{(bc)_{\alpha+\delta}(bd)_{\alpha}(ac)_{\delta}}{(abcd)_{\alpha+\delta}}.
\end{equation}

One can obtain \eqref{eq:AWmoment1} from \eqref{eq:AWmoment2} by applying the
$_3\phi_1$-transformation \cite[(III.8)]{GR} to the $\alpha$-sum after fixing
$\gamma$, $\delta$, and $N=\alpha+\beta$.

We next check if the heuristic in Remark~\ref{remark:heur} leads to
correct results in these cases. The absolutely continuous Askey-Wilson
measure $w(x;a,b,c,d|q)$ with total mass $1$ for $0<q<1$,
$\max(|a|,|b|,|b|,|d|)<1$ is, if $x=\cos\theta$, $\theta\in [0,\pi]$,
\begin{align}
\label{eq:wabcd}
w(\cos\theta;a,b,c,d|q) &= \frac{(q,ab,ac,ad,bc,bd,cd)_\infty}{2\pi(abcd)_\infty} \\
\notag &\quad \times \frac{(e^{2i\theta},e^{-2i\theta})_\infty} {(ae^{i\theta},ae^{-i\theta},
be^{i\theta},be^{-i\theta},ce^{i\theta},ce^{-i\theta},de^{i\theta},de^{-i\theta})_\infty}.
\end{align}
Then the measures for the $q$-Hermite $H_n(x|q)$, the big $q$-Hermite
$H_n(x;a|q)$, the Al-Salam-Chihara $Q_n(x;a,b|q)$, and the dual
$q$-Hahn $p_n(x;a,b,c|q)$ are, respectively,
$w(\cos\theta;0,0,0,0|q)$, $w(\cos\theta;a,0,0,0|q)$,
$w(\cos\theta;a,b,0,0|q)$, and $w(\cos\theta;a,b,c,0|q)$.
Notice that each successive measure comes from the
previous measure by inserting infinite products.

\begin{example}
Let $R_k(x) = H_k(x|q)$ and $S_k(x)=H_k(x;a|q)$ so that
\[
w_S(\cos \theta)=w_R(\cos \theta)
\frac{1}{(ae^{i\theta},ae^{-i\theta})_\infty}.
\] 
In this case, we have $\LL_R(R_k(x)^2) =(q)_k$ and 
\[
R_k(x) = \sum_{i=0}^k c_{k,i} S_i(x),
\]  
where $c_{k,i} = \qbinom{k}{i}a^{k-i}$. By the heuristic in
Remark~\ref{remark:heur}, 
\[
w_S(x) = w_R(x) \sum_{k=0}^\infty \frac{R_k(x)}{(q)_k} a^k=
w_R(x)\frac{1}{(ae^{i\theta} , ae^{-i\theta})_\infty},
\]
where we have used the $q$-Hermite generating function
\cite[(14.26.11), p.542]{KLS}.
\end{example}

\begin{example}
Let $R_k(x) = H_k(x;a|q)$ and $S_k(x)=Q_k(x;a,b|q)$ so that
\[
w_S(\cos \theta)=w_R(\cos \theta)
\frac{(ab)_\infty}{(be^{i\theta},be^{-i\theta})_\infty}.
\] 
In this case, we have $\LL_R(R_k(x)^2) =(q)_k$ and 
\[
R_k(x) = \sum_{i=0}^k c_{k,i} S_i(x),
\]  
where $c_{k,i} = \qbinom{k}{i}b^{k-i}$. By the heuristic in
Remark~\ref{remark:heur}, 
\[
w_S(x) = w_R(x) \sum_{k=0}^\infty \frac{R_k(x)}{(q)_k} c^k=
w_R(x)\frac{(ab)_\infty}{(be^{i\theta} , be^{-i\theta})_\infty},
\]
where we have used the big $q$-Hermite generating function
\cite[(14.18.13), p.512]{KLS}.
\end{example}

\begin{example}
Let $R_k(x) = Q_k(x;a,b|q)$ and $S_k(x)=p_k(x;a,b,c|q)$ so that
\[
w_S(\cos \theta)=w_R(\cos \theta)
\frac{(ac,bc)_\infty}{(ce^{i\theta},ce^{-i\theta})_\infty}.
\] 
In this case, we have $\LL_R(R_k(x)^2) =(q,ab)_k$ and 
\[
R_k(x) = \sum_{i=0}^k c_{k,i} S_i(x),
\]  
where $c_{k,i} = \qbinom{k}{i}\frac{(ab)_k}{(ab)_i}c^{k-i}$. By the heuristic in
Remark~\ref{remark:heur}, 
\[
w_S(x) = w_R(x) \sum_{k=0}^\infty \frac{R_k(x)}{(q,ab)_k}  (ab)_k c^k=
w_R(x)\frac{(ac,bc)_\infty}{(ce^{i\theta} , ce^{-i\theta})_\infty},
\]
where we have used the Al-Salam-Chihara generating function
\cite[(14.8.13), p.458]{KLS}.
\end{example}

Notice that in the above example we used the known 
generating function for the Al-Salam-Chihara polynomials $Q_n(x;a,b|q)$. 
If we apply the same steps to $R_k(x)=p_k(x;a,b,c,0|q)$  and 
$S_k(x)=p_k(x;a,b,c,d|q)$, a new generating function appears.

\begin{thm}
\label{thm:dual_q_Hahn}
We have
\[
(abct)_\infty
\sum_{k=0}^\infty \frac{p_k(x;a,b,c,0|q)}{(q,abct)_k} t^k
= \frac{(at,bt,ct)_\infty}{(te^{i\theta} , te^{-i\theta})_\infty}.
\]
\end{thm}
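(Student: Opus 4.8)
The plan is to prove the identity directly, turning the heuristic of Remark~\ref{remark:heur} into a rigorous argument by bootstrapping up from the Al-Salam-Chihara generating function via the connection coefficient \eqref{eq:ccab}, rather than from the relation between measures. The starting point is the Al-Salam-Chihara generating function used in the last example, namely
\[
\sum_{n=0}^\infty \frac{Q_n(x;a,b|q)}{(q)_n}\, t^n = \frac{(at,bt)_\infty}{(te^{i\theta},te^{-i\theta})_\infty}, \qquad x=\cos\theta,
\]
see \cite[(14.8.13), p.458]{KLS}. First I would substitute \eqref{eq:ccab}, which expands $Q_n(x;a,b|q)$ in the dual $q$-Hahn basis $p_k(x;a,b,c|q)=p_k(x;a,b,c,0|q)$, into the left-hand side. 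For $|q|<1$ and $|a|,|b|,|c|,|t|$ small the resulting double series converges absolutely, so the order of summation may be interchanged; this is the one step that requires a word of justification.

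Next I would reindex by setting $n=k+m$ with $m\ge 0$ and use $\qbinom{k+m}{k}=(q)_{k+m}/((q)_k(q)_m)$ to cancel the factor $(q)_{k+m}$ produced by $t^n/(q)_n$. The remaining factor $(ab)_{k+m}=(ab)_k(abq^k)_m$ splits, and the $k$-dependent piece cancels against the $(ab)_k$ in the denominator of \eqref{eq:ccab}, so that the expression collapses to
\[
\sum_{k=0}^\infty \frac{p_k(x;a,b,c|q)}{(q)_k}\, t^k \sum_{m=0}^\infty \frac{(abq^k)_m}{(q)_m}\,(ct)^m .
\]

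The key step is then to recognize the inner sum as an instance of the $q$-binomial theorem \cite[(II.3)]{GR}, which evaluates it to $(abctq^k)_\infty/(ct)_\infty$. Using $(abctq^k)_\infty=(abct)_\infty/(abct)_k$ pulls out the global factor $(abct)_\infty/(ct)_\infty$ and leaves exactly $\sum_{k\ge 0} p_k(x;a,b,c|q)\,t^k/((q)_k(abct)_k)$. Finally I would equate the result with the Al-Salam-Chihara right-hand side, multiply through by $(ct)_\infty/(abct)_\infty$ to absorb $(ct)_\infty$ into the numerator product, and rewrite $(q)_k(abct)_k=(q,abct)_k$ together with $p_k(x;a,b,c|q)=p_k(x;a,b,c,0|q)$ to reach the stated formula. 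The identity then holds for all admissible parameters by analytic continuation. I expect no serious obstacle: apart from the routine interchange of the two summations, the whole computation is a single reindexing followed by one application of the $q$-binomial theorem, and the main content is simply the choice of the correct generating function and connection coefficient to bootstrap from.
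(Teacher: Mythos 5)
Your argument is correct. Substituting \eqref{eq:ccab} into the Al-Salam--Chihara generating function \cite[(14.8.13), p.458]{KLS}, reindexing $n=k+m$, splitting $(ab)_{k+m}=(ab)_k(abq^k)_m$, and collapsing the inner sum by the $q$-binomial theorem gives
\[
\frac{(at,bt)_\infty}{(te^{i\theta},te^{-i\theta})_\infty}
=\frac{(abct)_\infty}{(ct)_\infty}\sum_{k=0}^\infty \frac{p_k(x;a,b,c,0|q)}{(q,abct)_k}\,t^k ,
\]
which is the theorem after multiplying both sides by $(ct)_\infty$. (Your last step should be multiplication by $(ct)_\infty$; multiplying by $(ct)_\infty/(abct)_\infty$ as you wrote gives an equivalent rearrangement, so nothing breaks.)

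This is the same bootstrapping strategy as the paper's proof --- an Al-Salam--Chihara generating function combined with one member of the connection-coefficient family \eqref{eq:cc} --- but the execution is genuinely different. The paper writes the right-hand side as $(at)_\infty$ times the generating function of $Q_n(x;b,c|q)$, expands both sides in powers of $t$ and equates coefficients of $t^N$ to get \eqref{eq:3}, then substitutes the connection formula that adds the parameter $a$ and finishes by recognizing a terminating $q$-Vandermonde sum \cite[(II.6), p.~354]{GR}. You instead start from the pair $(a,b)$, add the parameter $c$ via \eqref{eq:ccab}, stay at the level of the series, and evaluate the inner sum in closed form with the nonterminating $q$-binomial theorem \cite[(II.3), p.~354]{GR}; by the symmetry of the Askey--Wilson parameters the two connection formulas are the same identity under relabeling. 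What your route buys is a forward derivation: the formula, including the shape of the factor $(q,abct)_k$, emerges from the computation instead of being posited and verified, and only one classical summation is needed. The cost is the absolute-convergence justification for interchanging the two sums and the final analytic continuation, both of which you correctly flag. The paper's coefficient-wise route avoids all convergence questions, since every identity it manipulates is a finite sum, at the price of having to state the target identity up front and spot the $q$-Vandermonde evaluation at the end.
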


\begin{proof} We must show
  \begin{equation}
    \label{eq:1}
(abct)_\infty \sum_{n=0}^\infty \frac{t^n}{(q,abct)_n} p_n(x;a,b,c,0|q)=
\frac{(bt,ct)_\infty}{(te^{i\theta},te^{-i\theta})_\infty}(at)_\infty.
  \end{equation}

Using the Al-Salam-Chihara generating function and the $q$-binomial theorem 
\cite[(II.3), p. 354]{GR}, 
\eqref{eq:1} is equivalent to 
\begin{equation}
  \label{eq:3}
\sum_{n=0}^N \frac{p_n(x;b,c,0,0|q)}{(q)_n} \frac{(-a)^{N-n}q^{\binom{N-n}{2}}}{(q)_{N-n}}=
\sum_{n=0}^N \frac{p_n(x;a,b,c,0|q)}{(q)_n} \frac{(-abcq^n)^{N-n}q^{\binom{N-n}{2}}}{(q)_{N-n}}.
\end{equation}
Now use the connection coefficients
\[
p_n(x;b,c,0,0|q)= (bc)_n \sum_{k=0}^n \qbinom nk
p_k(x;a,b,c,0|q)\frac{a^{n-k}}{(bc)_{k}},
\]
to show that \eqref{eq:3} follows from
\[
\sum_{n=k}^N \frac{(bc)_n}{(q)_n} \qbinom nk
\frac{a^{N-k}}{(bc)_k}
\frac{(-1)^{N-n}q^{\binom{N-n}{2}}}{(q)_{N-n}}=
\frac{1}{(q)_k}\frac{(-abcq^k)^{N-k}}{(q)_{N-k}} q^{\binom{N-k}{2}}.
\]
This summation is a special case of the $q$-Vandermonde theorem  \cite[(II.6), p. 354]{GR}.

\end{proof}
A generalization of Theorem~\ref{thm:dual_q_Hahn} to 
Askey-Wilson polynomials is given in \cite{IS2013}. 

A natural generalization of the mixed moments in \eqref{eq:AW}
is 
\[
\LL_{a,b,c,d}(x^n p_m(x;a,b,c,d|q)
p_\ell(x;a,b,c,d|q)).
\]
For general orthogonal polynomials 
Viennot has given a combinatorial interpretation for 
$\LL(x^np_mp_\ell)$ in terms of weighted 
Motzkin paths. An explicit formula when
$p_n= p_n(x;a,b,c,d|q)$ may be given 
using \eqref{eq:cc} and a $q$-Taylor expansion
\cite{IS2003}, but we do not state the result here.

\section{Generating functions}
\label{sec:mult-yt_infty-or}

In \S~\ref{sec:comp-line-funct} we noted the following generating
functions for our bootstrapping polynomials:
continuous $q$-Hermite $H_n(x|q)$, continuous big $q$-Hermite
$H_n(x;a|q)$, and Al-Salam-Chihara $Q_n(x;a,b|q)$
\begin{equation}
  \label{eq:gf_Hermite}
\sum_{n=0}^\infty \frac{H_n(x|q)}{(q)_n} t^n =
\frac{1}{(te^{i\theta},te^{-i\theta})_\infty},
\end{equation}
\begin{equation}
  \label{eq:gf_big_Hermite}
\sum_{n=0}^\infty \frac{H_n(x;a|q)}{(q)_n}  t^n
= \frac{(at)_\infty}{(te^{i\theta} , te^{-i\theta})_\infty},
\end{equation}
\begin{equation}
  \label{eq:gf_ASC}
\sum_{n=0}^\infty \frac{Q_n(x;a,b|q)}{(q)_n}t^n
= \frac{(at,bt)_\infty}{(te^{i\theta} , te^{-i\theta})_\infty}.
\end{equation}

Note that \eqref{eq:gf_big_Hermite} is obtained from
\eqref{eq:gf_Hermite} by multiplying by $(at)_\infty$ and
\eqref{eq:gf_ASC} is obtained from \eqref{eq:gf_big_Hermite} by
multiplying by $(bt)_\infty$. However, if we multiply \eqref{eq:gf_ASC}
by $(ct)_\infty,$ we no longer have a generating function for
orthogonal polynomials. It is the generating function
for polynomials which satisfy a recurrence relation of finite order, but 
longer than order three, which orthogonal polynomials have.  

The purpose of this section is to explain this phenomenon.  We
consider polynomials whose generating function are obtained by
multiplying the generating function of orthogonal polynomials by
$(yt)_\infty$ or $1/(-yt)_\infty.$

We say that polynomials $p_n(x)$ \emph{satisfy a $d$-term recurrence
  relation} if there exist a real number $A$ and sequences
$\{b_{n}^{(0)}\}_{n\ge0}, \{b_{n}^{(1)}\}_{n\ge1},\dots,
\{b_{n}^{(d-2)}\}_{n\ge d-2}$ such that, for $n\ge0$, 
\[
p_{n+1}(x) = (Ax - b_n^{(0)})p_n(x) - b_n^{(1)}p_{n-1}(x)-\dots-b_n^{(d-2)}p_{n-d+2}(x),
\]
where $p_{i}(x)=0$ for $i<0$. 

\begin{thm}
\label{thm:main}
  Let $p_n(x)$ be polynomials satisfying $p_{n+1}(x) = (Ax-b_n)p_n(x)-\lambda_n
  p_{n-1}(x)$ for $n\ge0$, where $p_{-1}(x)=0$ and $p_0(x)=1$. 
If $b_{k}$ and $\frac{\lambda_{k}}{1-q^{k}}$ are polynomials in $q^k$ of
degree $r$ and $s$, respectively, which are independent of $y$, then the polynomials $P^{(1)}_n(x,y)$ in $x$ 
defined by 
\[
\sum_{n=0}^\infty P^{(1)}_{n} (x,y) \frac{t^n}{(q)_n}
=(yt)_\infty\sum_{n=0}^\infty p_{n} (x) \frac{t^n}{(q)_n}
\]
satisfy a $d$-term recurrence relation for $d=\max(r+2,s+3)$.
\end{thm}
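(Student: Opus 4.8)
The plan is to turn the three-term recurrence for $p_n$ into a $q$-functional equation for its generating function, multiply that equation by the factor $(yt)_\infty$ to obtain a functional equation for the generating function of the $P^{(1)}_n$, and then read off a recurrence by extracting the coefficient of a power of $t$. Throughout I write $F(t)=\sum_{n\ge0}p_n(x)\,t^n/(q)_n$ and $G(t)=(yt)_\infty F(t)=\sum_{n\ge0}P^{(1)}_n(x,y)\,t^n/(q)_n$, so that the theorem becomes a statement about how the coefficients of $G$ are coupled.

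First I would record the hypotheses as $b_n=\sum_{j=0}^r\beta_j q^{jn}$ and $\lambda_n/(1-q^n)=\sum_{i=0}^s\mu_i q^{in}$ with $\beta_j,\mu_i$ independent of $y$. The whole point of the normalization $t^n/(q)_n$ is that replacing the coefficient by $q^{jn}$ times it corresponds to the dilation $t\mapsto q^jt$, since $\sum_n p_n q^{jn}t^n/(q)_n=F(q^jt)$. Multiplying the recurrence $p_{n+1}=(Ax-b_n)p_n-\lambda_n p_{n-1}$ by $t^n/(q)_n$ and summing, the three pieces become: the left side $\sum_n p_{n+1}t^n/(q)_n=(F(t)-F(qt))/t$ (reindex and use $1/(q)_{m-1}=(1-q^m)/(q)_m$); the $b$-term $\sum_n b_n p_n t^n/(q)_n=\sum_{j=0}^r\beta_j F(q^jt)$; and, after shifting $n\mapsto n+1$ and using $(q)_{n+1}=(q)_n(1-q^{n+1})$ to expose the factor $\lambda_{n+1}/(1-q^{n+1})$, the $\lambda$-term $\sum_n \lambda_n p_{n-1}t^n/(q)_n=t\sum_{i=0}^s\mu_i q^i F(q^it)$. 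This produces the functional equation
\[
F(t)-F(qt)=Ax\,t\,F(t)-t\sum_{j=0}^r\beta_j F(q^jt)-t^2\sum_{i=0}^s\mu_i q^i F(q^it),
\]
which has polynomial coefficients in $t$ and couples $F(t),F(qt),\dots,F(q^{\max(r,s)}t)$.

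Next I would pass to $G$. From $(yt)_\infty=(yt)_j\,(yq^jt)_\infty$ one gets $(yt)_\infty F(q^jt)=(yt)_j\,G(q^jt)$, so multiplying the displayed equation by $(yt)_\infty$ yields
\[
G(t)-(1-yt)G(qt)=Ax\,t\,G(t)-t\sum_{j=0}^r\beta_j (yt)_j\,G(q^jt)-t^2\sum_{i=0}^s\mu_i q^i (yt)_i\,G(q^it).
\]
Now I extract the coefficient of $t^N$, using that $(yt)_j$ is a polynomial in $t$ of degree $j$ (by the $q$-binomial theorem) and that $[t^m]G(q^jt)=q^{jm}P^{(1)}_m/(q)_m$. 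The left side involves only $P^{(1)}_N$ and $P^{(1)}_{N-1}$, with the coefficient of $P^{(1)}_N$ equal to $1/(q)_{N-1}\ne0$; the term $Ax\,tG(t)$ contributes $Ax\,P^{(1)}_{N-1}/(q)_{N-1}$; the $b$-sum contributes indices $N-1,\dots,N-1-r$; and the $\lambda$-sum contributes indices $N-2,\dots,N-2-s$. Hence the resulting linear relation involves exactly $P^{(1)}_N,\dots,P^{(1)}_{N-\max(r+1,s+2)}$, which after clearing $(q)_{N-1}$ and solving for $P^{(1)}_N$ (legitimate since its coefficient is nonzero) is, with $N=n+1$, a $d$-term recurrence for $d=\max(r+2,s+3)$.

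The step I expect to be the main obstacle is checking that the relation has precisely the normalized shape demanded by the definition rather than merely being some $\max(r+1,s+2)+1$-term linear identity. Concretely, the variable $x$ must enter only through the single contribution $Ax\,tG(t)$, so that solving for $P^{(1)}_{n+1}$ leaves the $x$-dependence exactly as $Ax\,P^{(1)}_n$ with the \emph{same} constant $A$, every remaining coefficient $b^{(i)}_n$ being an $x$-free (and in fact $y$-dependent but otherwise explicit) rational expression in $q^n$. This is forced by the derivation because $x$ appears nowhere else, but it must be confirmed together with the index bookkeeping of the previous paragraph to see that the order is indeed $\max(r+2,s+3)$ and not larger; the two transformations (the $q$-derivative reading of $\sum p_{n+1}t^n/(q)_n$ and the identity $(yt)_\infty F(q^jt)=(yt)_j G(q^jt)$) are the places where an error in the shift conventions would corrupt the index range, so these are the computations I would verify most carefully.
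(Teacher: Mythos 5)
Your proof is correct, and it reaches the theorem by a genuinely different organization than the paper's. The paper stays at the coefficient level throughout: its Lemma~\ref{lem:rec1} expands $P^{(1)}_n(x,y)=\sum_k\qbinom{n}{k}(-1)^k y^k q^{\binom k2}p_{n-k}(x)$, applies Pascal's rule for Gaussian binomials and the three-term recurrence of $p_n$, and recognizes the resulting sums as values of $P^{(1)}$ at the \emph{shifted arguments} $yq^{1-j}$; then Lemma~\ref{lem:yq} is applied repeatedly to rewrite each $P^{(1)}_k(x,yq^{1-j})$ as a combination of $P^{(1)}_k(x,yq),\dots,P^{(1)}_{k-j}(x,yq)$, and finally $y$ is replaced by $y/q$. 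You instead convert the three-term recurrence into a $q$-difference equation for $F(t)$, multiply it by $(yt)_\infty$ via the identity $(yt)_\infty F(q^jt)=(yt)_j\,G(q^jt)$, and extract the coefficient of $t^{n+1}$: because $(yt)_j$ has $t$-degree $j$, the relation automatically involves only the window $P^{(1)}_{n+1},\dots,P^{(1)}_{n+1-\max(r+1,s+2)}$, all at the \emph{same} value of $y$, and solving for $P^{(1)}_{n+1}$ (whose coefficient $1/(q)_n$ is nonzero, and which appears nowhere on the right side) gives $d=\max(r+2,s+3)$. The two computations are parallel --- your polynomial factor $(yt)_j$ accomplishes in one stroke what the repeated application of Lemma~\ref{lem:yq} followed by the substitution $y\mapsto y/q$ accomplishes in the paper --- but your functional-equation formulation needs no $y$-shifting at all and yields explicit formulas for the coefficients $b_n^{(i)}$ directly, whereas the paper's two lemmas are stated separately because they are reused elsewhere (the explicit recurrences in the examples, and the remark verifying that the order is exactly $\max(r+2,s+3)$, both invoke them). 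Your two flagged verification points are sound: $\sum_n p_{n+1}t^n/(q)_n=(F(t)-F(qt))/t$ follows from $1/(q)_{n-1}=(1-q^n)/(q)_n$, the shift bookkeeping in $(yt)_\infty F(q^jt)=(yt)_jG(q^jt)$ is correct, and since $x$ enters the functional equation only through $Ax\,tG(t)$, the recurrence indeed has the normalized form demanded by the definition, with the same constant $A$ and $x$-free coefficients.
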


We use two lemmas to prove Theorem~\ref{thm:main}.
In the following lemmas we use the same
notations as in Theorem~\ref{thm:main}.

\begin{lem}
\label{lem:yq}
We have
\[
P^{(1)}_n(x,y) = P^{(1)}_n(x,yq) - y(1-q^n) P^{(1)}_{n-1}(x,yq).
\]
\end{lem}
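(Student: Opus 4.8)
The plan is to exploit the defining generating function for $P^{(1)}_n(x,y)$ and to track how replacing $y$ by $yq$ affects the $q$-exponential factor $(yt)_\infty$. The key algebraic fact is the simple functional equation
\[
(yt)_\infty = (1-yt)(yqt)_\infty,
\]
which relates the generating function at $y$ to the one at $yq$ by a single multiplicative factor $(1-yt)$.

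First I would write out the two generating functions
\[
\sum_{n=0}^\infty P^{(1)}_{n}(x,y)\frac{t^n}{(q)_n}
=(yt)_\infty\sum_{n=0}^\infty p_{n}(x)\frac{t^n}{(q)_n},
\qquad
\sum_{n=0}^\infty P^{(1)}_{n}(x,yq)\frac{t^n}{(q)_n}
=(yqt)_\infty\sum_{n=0}^\infty p_{n}(x)\frac{t^n}{(q)_n},
\]
and substitute the first into the factorization above to obtain
\[
\sum_{n=0}^\infty P^{(1)}_{n}(x,y)\frac{t^n}{(q)_n}
=(1-yt)\sum_{n=0}^\infty P^{(1)}_{n}(x,yq)\frac{t^n}{(q)_n}.
\]
The next step is to extract the coefficient of $t^n/(q)_n$ from both sides. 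The factor $(1-yt)$ contributes a term from the $P^{(1)}_{n}(x,yq)$ coefficient and a shifted term from the $P^{(1)}_{n-1}(x,yq)$ coefficient. The only care needed is the bookkeeping of the $(q)_n$ normalization: multiplying by $t$ sends the coefficient of $t^{n-1}/(q)_{n-1}$ to the coefficient of $t^n/(q)_n$ after accounting for the ratio $(q)_{n-1}/(q)_n = 1/(1-q^n)$, which produces exactly the factor $(1-q^n)$ in the claimed identity.

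Carrying out this coefficient comparison gives
\[
P^{(1)}_n(x,y) = P^{(1)}_n(x,yq) - y(1-q^n)P^{(1)}_{n-1}(x,yq),
\]
which is the stated recurrence. The main obstacle is essentially just the correct handling of the $(q)_n$-normalized coefficient extraction — in particular verifying that the $t$-shift combined with the normalization yields the $(1-q^n)$ factor rather than some other $q$-dependent constant. This is a short but error-prone calculation, so I would double-check it by testing small values of $n$ (for instance $n=0$ forces $P^{(1)}_0(x,y)=P^{(1)}_0(x,yq)$, consistent with $P^{(1)}_0=1$, and $n=1$ gives a quick sanity check on the coefficient $y(1-q)$).
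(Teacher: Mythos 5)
Your proof is correct and is essentially identical to the paper's own argument: both derive the generating-function identity $\sum_n P^{(1)}_n(x,y)\,t^n/(q)_n = (1-yt)\sum_n P^{(1)}_n(x,yq)\,t^n/(q)_n$ from the factorization $(yt)_\infty=(1-yt)(yqt)_\infty$ and then equate coefficients of $t^n$. Your handling of the $(q)_n$-normalization, giving the factor $(1-q^n)$ via $1/(q)_{n-1}=(1-q^n)/(q)_n$, is exactly the bookkeeping the paper leaves implicit.
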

\begin{proof}
This is obtained by equating the coefficients of $t^n$ in 
\[
\sum_{n=0}^\infty P^{(1)}_n(x,y) \frac{t^n}{(q)_n}
= (1-yt) \sum_{n=0}^\infty P^{(1)}_n(x,yq) \frac{t^n}{(q)_n}.
\]
\end{proof}

\begin{lem}
\label{lem:rec1}
Suppose that $b_{k}$ and $\frac{\lambda_{k}}{1-q^{k}}$ are polynomials in
$q^k$ of degree $r$ and $s$, respectively, i.e., 
\[
b_k = \sum_{j=0}^r c_j (q^k)^j,\qquad
\frac{\lambda_{k}}{1-q^{k}} = \sum_{j=0}^s d_j (q^k)^j.
\]
Then
\[
P_{n+1}^{(1)}(x,y) = (Ax-y) P_{n}^{(1)}(x,yq)
-\sum_{j=0}^r c_j q^{nj} P^{(1)}_n (x,yq^{1-j})
-(1-q^n)\sum_{j=0}^s d_j q^{nj} P^{(1)}_{n-1} (x,yq^{1-j}).
\]
\end{lem}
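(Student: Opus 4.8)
The plan is to pass to generating functions in $t$ throughout. Write $G(x,t)=\sum_{n\ge 0} p_n(x)\,t^n/(q)_n$ for the generating function of the three-term-recurrence polynomials, so that by definition
\[
F(x,y,t):=\sum_{n\ge 0}P^{(1)}_n(x,y)\frac{t^n}{(q)_n}=(yt)_\infty\, G(x,t).
\]
First I would record a functional equation for $G$ coming from the recurrence in the form $Ax\,p_n=p_{n+1}+b_np_n+\lambda_np_{n-1}$. Multiplying by $t^n/(q)_n$ and summing, the shift term $p_{n+1}$ contributes $\frac1t\bigl(G(x,t)-G(x,qt)\bigr)$, and the hypothesis that $b_k=\sum_{j=0}^r c_j(q^k)^j$ and $\lambda_k/(1-q^k)=\sum_{j=0}^s d_j(q^k)^j$ is exactly what is needed to turn multiplication of the $n$th coefficient by $q^{nj}$ into the substitution $t\mapsto q^jt$. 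After using $(1-q^n)/(q)_n=1/(q)_{n-1}$ on the $\lambda_n$ term, this yields
\[
Ax\,G(x,t)=\frac{G(x,t)-G(x,qt)}{t}+\sum_{j=0}^r c_j\,G(x,q^jt)+t\sum_{j=0}^s d_j q^j\,G(x,q^jt).
\]

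The crux of the argument, and the step I expect to be the main obstacle to set up correctly, is the following bookkeeping identity: the coefficient of $t^n/(q)_n$ in $(yqt)_\infty\, G(x,q^jt)$ equals $q^{nj}P^{(1)}_n(x,yq^{1-j})$. The point is that the $y$-argument shift $q^{1-j}$ and the $t$-shift $q^j$ are tuned so that the infinite product stays fixed at $(yqt)_\infty$: evaluating $F(x,yq^{1-j},s)=(yq^{1-j}s)_\infty G(x,s)$ at $s=q^jt$ produces $(yqt)_\infty G(x,q^jt)$, while the substitution $s=q^jt$ is precisely what supplies the extra factor $q^{nj}$. This is what forces the arguments $yq^{1-j}$ appearing in the statement, and once it is isolated the rest is routine.

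With this identity in hand I would assemble the generating functions of the two sides of the asserted recurrence. Summing the right-hand side against $t^n/(q)_n$ turns the three groups of terms into $(yqt)_\infty$ times $(Ax-y)G(x,t)-\sum_{j}c_jG(x,q^jt)-t\sum_{j}d_jq^jG(x,q^jt)$, and substituting the functional equation above collapses this to $(yqt)_\infty\bigl[-yG(x,t)+\tfrac1t(G(x,t)-G(x,qt))\bigr]$. On the left, $\sum_{n\ge0}P^{(1)}_{n+1}(x,y)\,t^n/(q)_n=\frac1t\bigl(F(x,y,t)-F(x,y,qt)\bigr)=\frac1t\bigl((yt)_\infty G(x,t)-(yqt)_\infty G(x,qt)\bigr)$. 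Comparing the two sides, the common term $\frac1t(yqt)_\infty G(x,qt)$ cancels, and what remains reduces to the elementary product identity $(yt)_\infty=(1-yt)(yqt)_\infty$ --- the same relation underlying Lemma~\ref{lem:yq} --- which finishes the proof. Apart from the tuned-shift identity, every remaining step is a direct manipulation of $q$-series.
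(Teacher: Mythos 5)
Your proof is correct, and it takes a genuinely different route from the paper's. The paper works entirely at the level of coefficients: it expands $P^{(1)}_n(x,y)=\sum_{k}\qbinom nk(-1)^ky^kq^{\binom k2}p_{n-k}(x)$ via the $q$-binomial theorem, applies the $q$-Pascal identity $\qbinom{n+1}k=\qbinom n{k-1}+q^k\qbinom nk$ and the identity $\qbinom nk=\frac{1-q^n}{1-q^{n-k}}\qbinom{n-1}k$ inside the finite sum, substitutes the three-term recurrence for $p_{n+1-k}(x)$, and then recognizes the resulting sums as $q^{nj}P^{(1)}_n(x,yq^{1-j})$ and $q^{nj}P^{(1)}_{n-1}(x,yq^{1-j})$. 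You instead work with the full generating function: you encode the three-term recurrence as a $q$-difference functional equation for $G(x,t)$, isolate the key \emph{tuned-shift} identity (that the coefficient of $t^n/(q)_n$ in $(yqt)_\infty G(x,q^jt)$ is $q^{nj}P^{(1)}_n(x,yq^{1-j})$, since the $y$-shift $q^{1-j}$ exactly compensates the $t$-shift $q^j$ in the infinite product), and close the argument with $(yt)_\infty=(1-yt)(yqt)_\infty$. The two proofs are dual to each other --- your functional equation is the generating-function avatar of the paper's recurrence substitution, your product identity plays the role of $q$-Pascal, and your tuned-shift identity matches the paper's final recognition step --- but the organization differs. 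What your version buys is transparency: it makes clear \emph{why} the arguments $yq^{1-j}$ must appear (they are forced by holding the product at $(yqt)_\infty$), whereas the paper's version buys elementarity, staying within finite sums and never invoking formal power series identities beyond the initial expansion. Both are complete and rigorous.
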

\begin{proof}
  Expanding $(yt)_\infty$ using the $q$-binomial theorem,
we have
\[
P^{(1)}_n(x,y) = \sum_{k=0}^n \qbinom nk (-1)^k y^k q^{\binom k2}
p_{n-k}(x).
\]
Using the relation $\qbinom{n+1}k = \qbinom{n}{k-1}+q^k\qbinom nk$, we
have
\begin{align*}
P^{(1)}_{n+1}(x,y) &= \sum_{k=0}^{n+1} \left(\qbinom{n}{k-1}+q^k\qbinom nk\right)
(-1)^k y^k q^{\binom k2} p_{n+1-k}(x)\\
&= -y P^{(1)}_n(x,yq) +\sum_{k=0}^n 
\qbinom nk (-1)^k (yq)^k q^{\binom k2} p_{n+1-k}(x).
\end{align*}
By $\qbinom nk = \frac{1-q^n}{1-q^{n-k}}\qbinom {n-1}{k}$ and the
3-term recurrence
\[
p_{n+1-k}(x)=(Ax-b_{n-k})p_{n-k}(x)-\lambda_{n-k}p_{n-1-k}(x),
\] we get
\begin{multline}\label{eq:pn+1}
P^{(1)}_{n+1}(x,y) =  (Ax-y) P^{(1)}_n(x,yq) -\sum_{k=0}^n 
\qbinom nk (-1)^k (yq)^k q^{\binom k2} p_{n-k}(x) b_{n-k}\\ 
-(1-q^n) \sum_{k=0}^{n-1} \qbinom {n-1}k (-1)^k (yq)^k q^{\binom k2} 
p_{n-1-k}(x) \frac{\lambda_{n-k}}{1-q^{n-k}}.
\end{multline}
Since
\[
b_{n-k} = \sum_{j=0}^r c_j q^{nj} (q^k)^{-j},\qquad
\frac{\lambda_{n-k}}{1-q^{n-k}} = \sum_{j=0}^s q^{nj} d_j (q^k)^{-j},
\]
and
\[
P^{(1)}_n(x,yq^{1-j}) = \sum_{k=0}^n \qbinom nk (-1)^k (yq)^k 
q^{\binom k2} p_{n-k}(x) (q^{k})^{-j},
\]
we obtain the desired recurrence relation. 
\end{proof}

Now we can prove Theorem~\ref{thm:main}. 

\begin{proof}[Proof of Theorem~\ref{thm:main}]
By Lemma~\ref{lem:rec1}, we can write
\[
P_{n+1}^{(1)}(x,y) = (Ax-y) P_{n}^{(1)}(x,yq)
-\sum_{j=0}^r c_j q^{nj} P^{(1)}_n (x,yq^{1-j})
-(1-q^n)\sum_{j=0}^s d_j q^{nj} P^{(1)}_{n-1} (x,yq^{1-j}).
\]
Using Lemma~\ref{lem:yq} we can express
$P^{(1)}_k (x,yq^{1-j})$ as a linear combination of
\[
P^{(1)}_k (x,yq), P^{(1)}_{k-1} (x,yq), \dots, P^{(1)}_{k-j} (x,yq).
\]
Replacing $y$ by $y/q$, we obtain a $\max(r+2,s+3)$-term recurrence relation for
$P^{(1)}_n(x,y)$.
\end{proof}

\begin{remark} One may verify that the order of recurrence for $P_{n}^{(1)}(x,y)$ is
exactly $\max(2+r,3+s)$ in the following way. Lemma~\ref{lem:yq} is applied $s$ times to the term
$P^{(1)}_{n-1} (x,yq^{1-s})$ to obtain a linear combination of 
$ P^{(1)}_{n-1} (x,yq) ,P^{(1)}_{n-2} (x,yq), \cdots, P^{(1)}_{n-s-1} (x,yq).$ The coefficient
of $P^{(1)}_{n-s-1} (x,yq)$ in this expansion is $(-1)^s (q^{n-1};q^{-1})_s y^s q^{\binom{s}{2}}.$
Similarly, considering $P^{(1)}_{n} (x,yq^{1-r})$, the coefficient
of $P^{(1)}_{n-r} (x,yq)$ in the expansion is $(-1)^r (q^{n};q^{-1})_r y^r q^{\binom{r}{2}}.$ 
These terms are non-zero, give a recurrence of order $\max(r+2,s+3)$, and 
could only cancel if $r=s+1.$ In this case, the 
coefficient of 
$P^{(1)}_{n-s-1} (x,yq)$ is 
\[
(q^n;q^{-1})_{s+1} (-1)^{s+1}y^s q^{\binom{s}{2}}q^{ns}\left( d_s-yc_{r}q^{r+s}\right).
\]
Since $d_s$ and $c_r$ are non-zero and independent of $y$, this is non-zero.
\end{remark}
 
\begin{remark}
  Theorem~\ref{thm:main} can be generalized for polynomials $p_n(x)$
  satisfying a finite term recurrence relation of order greater than
  $3$. For instance, if $p_{n+1}(x) = (Ax-b_n)p_n(x)-\lambda_n
  p_{n-1}(x) - \nu_n p_{n-2}(x)$, then using $\qbinom nk =
  \frac{1-q^n}{1-q^{n-k}}\qbinom {n-1}{k}$ twice one can see that
  Equation~\eqref{eq:pn+1} has the following extra sum in the right
  hand side:
\[
-(1-q^n)(1-q^{n-1}) \sum_{k=0}^{n-1} \qbinom {n-1}k (-1)^k (yq)^k q^{\binom k2} 
p_{n-2-k}(x) \frac{\nu_{n-k}}{(1-q^{n-k})(1-q^{n-k-1})}.
\]
Thus if $\frac{\nu_k}{(1-q^k)(1-q^{k-1})}$ is a polynomial in $q^k$
then $P_n^{(1)}(x,y)$ satisfy a finite term recurrence relation.
\end{remark}

Note that by using Lemmas~\ref{lem:yq} and \ref{lem:rec1}, one can find a 
recurrence relation for $P_n^{(1)}(x,y)$ in Theorem~\ref{thm:main}.

An analogous theorem holds for polynomial in $q^{-k}.$ We state the 
result without proof.

\begin{thm}
\label{thm:mainflip}
Let $p_n(x)$ be polynomials satisfying $p_{n+1}(x) =
(Ax-b_n)p_n(x)-\lambda_n p_{n-1}(x)$ for $n\ge0$, where $p_{-1}(x)=0$
and $p_0(x)=1$.  If $b_{k}$ and $\frac{\lambda_{k}}{1-q^{k}}$ are
polynomials in $q^{-k}$ of degree $r$ and $s$, respectively, which are
independent of $y$, and the constant term of
$\frac{\lambda_{k}}{1-q^{k}}$ is zero, then the polynomials
$P^{(2)}_n(x,y)$ defined by
\[
\sum_{n=0}^\infty P^{(2)}_{n} (x,y) \frac{q^{\binom n2}t^n}{(q)_n}
=\frac1{(-yt)_\infty}\sum_{n=0}^\infty p_{n} (x) \frac{q^{\binom n2}t^n}{(q)_n}
\]
satisfy a $d$-term recurrence relation for $d= \max(r+1,s+2)$.
\end{thm}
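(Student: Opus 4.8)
The plan is to imitate the proof of Theorem~\ref{thm:main} line by line, replacing the factor $(yt)_\infty$ by $1/(-yt)_\infty$ and carrying the extra normalization $q^{\binom n2}$ throughout. The two ingredients needed are a $y$-shift identity (the analogue of Lemma~\ref{lem:yq}) and a one-step recurrence that lowers the index of $p_n$ (the analogue of Lemma~\ref{lem:rec1}); the conclusion then follows by using the first to collapse all shifted arguments produced by the second.

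First I would record the shift identity. Since $(-yt)_\infty=(1+yt)(-yqt)_\infty$, the defining generating functions at $y$ and at $yq$ satisfy $(1+yt)$ times the former equals the latter; comparing coefficients of $t^n$ and clearing the factor $q^{\binom n2}/(q)_n$ gives
\[
P^{(2)}_n(x,yq) = P^{(2)}_n(x,y) + y(1-q^n)q^{1-n}P^{(2)}_{n-1}(x,y).
\]
Iterating this identity expresses any $P^{(2)}_n(x,yq^j)$ with $j\ge 0$ as a linear combination of $P^{(2)}_n(x,y),P^{(2)}_{n-1}(x,y),\dots,P^{(2)}_{n-j}(x,y)$, the bottom coefficient being a nonzero product of the factors above. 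This is the device that collapses all shifted arguments back onto the single argument $y$.

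Next I would expand $1/(-yt)_\infty=\sum_k(-yt)^k/(q)_k$ to obtain the explicit form
\[
P^{(2)}_n(x,y)=\sum_{k=0}^n\qbinom nk (-y)^k q^{\binom k2-(n-1)k}p_{n-k}(x),
\]
and then proceed exactly as in Lemma~\ref{lem:rec1}: apply $\qbinom{n+1}k=\qbinom n{k-1}+q^k\qbinom nk$, substitute the three-term recurrence for $p_{n+1-k}$, and write $b_{n-k}=\sum_{j=0}^r c_j q^{-nj}(q^k)^j$ and $\lambda_{n-k}/(1-q^{n-k})=\sum_{j=0}^s d_j q^{-nj}(q^k)^j$ (here the hypothesis that these are polynomials in $q^{-k}$ is used). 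Since each factor $(q^k)^j$ merges with $(-y)^k$ to raise the argument, this yields
\[
P^{(2)}_{n+1}(x,y)=(Ax-yq^{-n})P^{(2)}_n(x,y)-\sum_{j=0}^r c_j q^{-nj}P^{(2)}_n(x,yq^j)-(1-q^n)\sum_{j=0}^s d_j q^{-nj}P^{(2)}_{n-1}(x,yq^{j-1}).
\]
Running each $P^{(2)}_n(x,yq^j)$ and $P^{(2)}_{n-1}(x,yq^{j-1})$ through the shift identity collapses everything to argument $y$ and produces a finite-term recurrence for $P^{(2)}_n(x,y)$.

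The main obstacle is the precise order count, and this is exactly where the constant-term hypothesis enters. In the $\lambda$-sum the index $j=0$ would contribute the argument $yq^{-1}$, whose collapse does not terminate at bounded depth (it descends all the way to $P^{(2)}_0$); demanding that the constant term of $\lambda_k/(1-q^k)$ vanish removes this term, so the sum starts at $j=1$, its deepest argument is $yq^{s-1}$, and collapsing $P^{(2)}_{n-1}(x,yq^{s-1})$ reaches index $n-s$, contributing order $s+2$. The $b$-sum has deepest argument $yq^{r}$, and collapsing $P^{(2)}_n(x,yq^{r})$ reaches index $n-r$ with a nonzero bottom coefficient, contributing order $r+2$. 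Thus the recurrence should have order $\max(r+2,s+2)$, which is also the value obtained by formally substituting $q\mapsto q^{-1}$ in Theorem~\ref{thm:main} (under which the degree of $\lambda_k/(1-q^k)$ rises by one and its constant term becomes zero, turning $\max(r+2,s+3)$ into $\max(r+2,s+2)$). The one point I would verify carefully in the final writeup is whether some cancellation trims the $b$-contribution down to $r+1$ as stated, or whether the first argument should instead read $r+2$; tracking the coefficient of $P^{(2)}_{n-r}(x,y)$ in a small explicit case settles this at once.
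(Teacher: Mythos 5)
The paper states Theorem~\ref{thm:mainflip} \emph{without} proof (``we state the result without proof''), so there is no argument of record to compare against; your plan of redoing Lemmas~\ref{lem:yq} and~\ref{lem:rec1} with $(yt)_\infty$ replaced by $1/(-yt)_\infty$ and the weight $q^{\binom n2}$ carried along is exactly the natural route, and every step of it checks out. Your shift identity, your expansion $P^{(2)}_n(x,y)=\sum_{k=0}^n\qbinom nk(-y)^kq^{\binom k2-(n-1)k}p_{n-k}(x)$, and the resulting identity
\[
P^{(2)}_{n+1}(x,y)=(Ax-yq^{-n})P^{(2)}_n(x,y)-\sum_{j=0}^r c_jq^{-nj}P^{(2)}_n(x,yq^j)-(1-q^n)\sum_{j=1}^s d_jq^{-nj}P^{(2)}_{n-1}(x,yq^{j-1})
\]
are all correct; specializing to $A=1$, $b_n=0$, $\lambda_n/(1-q^n)=q(q^{-n})^2$ reproduces exactly the $4$-term recurrence displayed in Example~\ref{ex:dqh2}. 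You also correctly isolate the role of the vanishing constant term: it kills the $j=0$ term of the $\lambda$-sum, whose argument $yq^{-1}$ lies below the target $y$ and could not be collapsed in bounded depth.

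The one point you leave open resolves in your favor: the correct order is $\max(r+2,s+2)$, and the value $\max(r+1,s+2)$ printed in the theorem is an error (off by one in its first argument). Your bottom-coefficient computation settles it: the only term reaching $P^{(2)}_{n-r}(x,y)$ is the $j=r$ term of the $b$-sum, whose coefficient is a nonzero multiple of $c_ry^r$; when $r>s$ nothing in the $\lambda$-sum reaches that depth, and even when $r=s$ the deepest $\lambda$-contribution has degree $s-1$ in $y$, so no cancellation can occur and the order is exactly $\max(r+2,s+2)$, by the same reasoning as the Remark following Theorem~\ref{thm:main}. The paper's own Al-Salam--Carlitz II example confirms this: there $r=1$ and $\lambda_n=0$, yet $V_n^{(a)}$ satisfies a genuine $3$-term recurrence, i.e.\ $d=r+2$, whereas $\max(r+1,s+2)$ would predict a $2$-term recurrence, which is impossible (subtracting the two putative recurrences would equate a nonzero multiple of $V_{n-1}$ with a multiple of $V_n$). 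Your substitution check agrees: under $q\mapsto q^{-1}$ the hypotheses of Theorem~\ref{thm:mainflip} become those of Theorem~\ref{thm:main} with degrees $r$ and $s-1$, giving $\max(r+2,(s-1)+3)=\max(r+2,s+2)$. The two formulas coincide precisely when $r\le s$, which is why Example~\ref{ex:dqh2} (with $r=-\infty$) cannot detect the discrepancy. So write up your argument exactly as proposed, but state the conclusion with $d=\max(r+2,s+2)$.
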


We now give several applications of Theorem~\ref{thm:main} 
and Theorem~\ref{thm:mainflip}.
In the following examples, we use the notation in
these theorems.

\begin{example}
Let $p_n(x)$ be the continuous $q$-Hermite polynomial $H_n(x|q)$.  Then $A=2,
b_n =0$, and $\lambda_n=1-q^n$.  Since $r=-\infty$ and $s=0$, $P^{(1)}_n(x,y)$ satisfies a
3-term recurrence relation.  
By Lemma~\ref{lem:rec1}, we have
\[
P^{(1)}_{n+1}(x,y) = (2x-y) P^{(1)}_{n}(x,yq) - (1-q^n) P^{(1)}_{n-1}(x,yq).
\]
By Lemma~\ref{lem:yq} we have
\[
P^{(1)}_{n+1}(x,y) = P^{(1)}_{n+1}(x,yq) - y(1-q^n) P^{(1)}_{n}(x,yq).
\]
Thus
\[
P^{(1)}_{n+1}(x,yq) =(2x-yq^n) P^{(1)}_{n}(x,yq) - (1-q^n) P^{(1)}_{n-1}(x,yq).
\]
Replacing $y$ by $y/q$ we obtain
\[
P^{(1)}_{n+1}(x,y) =(2x-yq^{n-1}) P^{(1)}_{n}(x,y) - (1-q^n) P^{(1)}_{n-1}(x,y).
\]
Thus $P_n(x,y)$ are orthogonal polynomials, which are the continuous big
$q$-Hermite polynomials $H_n(x;y|q)$.
\end{example}

\begin{example}
  Let $p_n(x)$ be the continuous big $q$-Hermite polynomials $H_n(x;a|q)$.  Then
  $A=2, b_n = aq^n$, and $\lambda_n = 1-q^n$.  Since $r=1$ and $s=0$,
  $P^{(1)}_n(x,y)$ satisfies a 3-term recurrence relation.  Using the same method
  as in the previous example, we obtain
\[
P^{(1)}_{n+1}(x,y) =(2x-(a+y)q^{n}) P^{(1)}_{n}(x,y) - (1-q^n)(1-ayq^{n-1})
P^{(1)}_{n-1}(x,y).
\]
Thus $P^{(1)}_n(x,y)$ are orthogonal polynomials, which are the Al-Salam-Chihara
polynomials $Q_n(x;a,y|q)$.
\end{example}

\begin{example}
  Let $p_n(x)$ be the Al-Salam-Chihara polynomials $Q_n(x;a,b|q)$. Then $A=2,
  b_n= (a+b)q^n$, and $\lambda_n = (1-q^n) (1-abq^{n-1})$.  Since $r=1$ and
  $s=1$, $P_n(x,y)$ satisfies a 4-term recurrence relation.  By
  Lemma~\ref{lem:rec1},  we have
\[
P^{(1)}_{n+1}(x,y) = (2x-y)P^{(1)}_n(x,yq) - (a+b)q^n P^{(1)}_n(x,y) 
-(1-q^n)(-abq^{n-1}P^{(1)}_{n-1}(x,y) + P^{(1)}_{n-1}(x,yq)).
\]
Using Lemma~\ref{lem:yq} we get
\[
P^{(1)}_{n+1} = (2x-(a+b+y)q^n)P^{(1)}_n
-(1-q^n)(1-(ab+ay+by)q^{n-1}) P^{(1)}_{n-1}
-abyq^{n-2}(1-q^n)(1-q^{n-1}) P^{(1)}_{n-2}.
\]
\end{example}

\begin{example}
Let $p_n(x)$ be the continuous dual $q$-Hahn polynomials $p_n(x;a,b,c|q)$. Then
$A=2$ and 
\begin{align*}
b_n &= (a+b+c)q^n -abcq^{2n}-abcq^{2n-1},  \\
\lambda_n & = (1-q^n) (1-abq^{n-1}) (1-bcq^{n-1}) (1-caq^{n-1}).
\end{align*}
Since $r=2$ and $s=3$, $P_n(x,y)$ satisfies a 6-term recurrence relation. It is
possible to find an explicit recurrence relation using the same idea as in the
previous example. 
\end{example}

\begin{example}
\label{ex:dqh1}
  Let $p_n(x)$ be the discrete $q$-Hermite I polynomial $h_n(x;q)$.  Then $A=1,
  b_n =0$, and $\lambda_n=q^{n-1}(1-q^n)$.  Since $r=-\infty$ and $s=1$,
  $P^{(1)}_n(x,y)$ satisfies a 4-term recurrence relation which is
\[
P^{(1)}_{n+1}(x,y) = (x-yq^{n}) P^{(1)}_n (x,y) -q^{n-1}(1-q^n) P^{(1)}_{n-1}(x,y)
+yq^{n-2}(1-q^n)(1-q^{n-1}) P^{(1)}_{n-2}(x,y).
\]  
In \S4 we will study $P^{(1)}_n(x,y)=h_n(x,y;q)$, the discrete big $q$-Hermite I polynomials $h_n(x,y;q)$.
This is a proof of Theorem~\ref{thm:4term}.
\end{example}

\begin{example}
\label{ex:dqh2}
Let $p_n(x)$ be the discrete $q$-Hermite II polynomial $\HT_n(x;q)$.  Then
$A=1, b_n =0$, and $\lambda_n=q^{-2n+1}(1-q^n)$.  Since $b_n$ and
$\lambda_n/(1-q^n)$ are polynomials in $q^{-n}$ of degrees $-\infty$ and $2$,
respectively, and the constant term of $\lambda_n/(1-q^n)$ is 0,
so $P^{(2)}_n(x,y)$ satisfies a 4-term recurrence relation.  It is
 \[
P^{(2)}_{n+1}(x,y) = (x-yq^{-n}) P^{(2)}_n (x,y) -q^{-2n+1}(1-q^n) P^{(2)}_{n-1}(x,y)
-yq^{3-3n}(1-q^n)(1-q^{n-1}) P^{(2)}_{n-2}(x,y).
\]  
$P^{(2)}_n(x,y)$ are the discrete big $q$-Hermite II polynomials
$\HT_n(x,y;q)$ of \S~\ref{sec:comb-discr-big}.
\end{example}

\begin{example}
The \emph{Al-Salam--Carlitz I} polynomials $U_n^{(a)}(x;q)$ are defined by 
\[
\sum_{n=0}^\infty  \frac{U_n^{(a)}(x;q)}{(q)_n}t^n
=\frac{(t)_\infty (at)_\infty}{(xt)_\infty }.
\]
They have the 3-term recurrence relation
\[
  \label{eq:asc1}
U_{n+1}^{(a)}(x;q) = (x-(1+a)q^{n}) U_{n}^{(a)}(x;q)
+aq^{n-1}(1-q^n) U_{n-1}^{(a)}(x;q).
\]

Let $p_n(x)$ be the polynomials with generating function
\[
\sum_{n=0}^\infty  \frac{p_n(x)}{(q)_n} t^n
=\frac{(t)_\infty}{(xt)_\infty}
=\sum_{n=0}^\infty  \frac{x^n(1/x)_n}{(q)_n} t^n.
\]
Then $p_n(x) = x^n (1/x)_n$.  Thus $p_{n+1}(x) = (x-q^{n}) p_n(x)$, and we
have $A=1, b_n = q^{n}$, and $\lambda_n =0$, and
$U_n^{(a)}(x;q) = P^{(1)}_n(x,a)$. 
\end{example}

\begin{example}
  The \emph{Al-Salam--Carlitz II} polynomials $V_n^{(a)}(x;q)$ are defined by 
\[
\sum_{n=0}^\infty  \frac{(-1)^n q^{\binom n2}}{(q)_n} V_n^{(a)}(x;q) t^n
=\frac{(xt)_\infty}{(t)_\infty (at)_\infty}.
\]
They have the 3-term recurrence relation
\begin{equation}
  \label{eq:2}
V_{n+1}^{(a)}(x;q) = (x-(1+a)q^{-n}) V_{n}^{(a)}(x;q)
-aq^{-2n+1}(1-q^n) V_{n-1}^{(a)}(x;q).
\end{equation}

Let $p_n(x)$ be the polynomials with generating function
\[
\sum_{n=0}^\infty  \frac{q^{\binom n2}}{(q)_n}p_n(x) t^n
=\frac{(xt)_\infty}{(t)_\infty}
=\sum_{n=0}^\infty  \frac{(x)_n}{(q)_n} t^n
=\sum_{n=0}^\infty  \frac{(-1)^nq^{\binom n2}x^n(1/x)_n}{(q)_n} t^n.
\]
Then $p_n(x) = (-1)^n x^n (1/x)_n$. 
Thus $p_{n+1}(x) = (-x+q^{-n}) p_n(x)$, and we have $A=-1, b_n = -q^{-n}$, and
$\lambda_n =0$ and we obtain 
$V_n^{(a)}(x;q) = (-1)^n P^{(2)}_n(-x,-a)$ and \eqref{eq:2}. 

\end{example}

Garrett, Ismail, and Stanton \cite[Section 7]{GIS}
considered the polynomials $\hat H_n(x|q)$ defined by the generating function
\[
\sum_{n=0}^\infty \hat H_n(x|q) \frac{t^n}{(q)_n}=
\frac{(t^2;q)_\infty}{(te^{i\theta},te^{-i\theta};q)_\infty}=
(t^2;q)_\infty \sum_{n=0}^\infty H_n(x|q) \frac{t^n}{(q)_n}. 
\]
It turns out that $p_n=\hat H_n(x|q)$ satisfies the 5-term recurrence relation
\[
p_{n+1}= 2xp_n +(q^{2n}+q^{2n-1}-q^{n-1}-1)p_{n-1}+
q^{n-2}(1-q^n)(1-q^{n-1})(1-q^{n-2})p_{n-3}. 
\]

The following generalization of Theorem~\ref{thm:main} explains this
phenomenon for $m=2$, $r=0$, and $s=0$.  We omit the proof, which is
similar to that of Theorem~\ref{thm:main}.

\begin{thm}
\label{thm:main_gen}
Let $m$ be a positive integer. 
Let $p_n(x)$ be polynomials satisfying $p_{n+1}(x) =
(Ax-b_n)p_n(x)-\lambda_n p_{n-1}(x)$ for $n\ge0$, where $p_{-1}(x)=0$
and $p_0(x)=1$.  If $b_{k}$ and $\frac{\lambda_{k}}{1-q^{k}}$ are
polynomials in $q^k$ of degree $r$ and $s$, respectively, which are
independent of $y$, then the polynomials $P_n(x,y)$ in $x$
defined by
\[
\sum_{n=0}^\infty P_{n} (x,y) \frac{t^n}{(q)_n}
=(yt^m)_\infty\sum_{n=0}^\infty p_{n} (x) \frac{t^n}{(q)_n}
\]
satisfy a $d$-term recurrence relation for $d= \max(rm^2+2,sm^2+3,m^2+1)$.
\end{thm}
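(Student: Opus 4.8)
The plan is to imitate the two-lemma proof of Theorem~\ref{thm:main}, replacing the factor $(yt)_\infty$ by $(yt^m)_\infty$ and carefully tracking the extra powers of $q$ that this introduces. Writing $P_n=P_n(x,y)$, the two ingredients I need are a functional relation in the role of Lemma~\ref{lem:yq} and a ``raised recurrence'' in the role of Lemma~\ref{lem:rec1}; the new factors of $m$ and $m^2$ in the claimed order should then drop out of the bookkeeping.

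First I would record the analog of Lemma~\ref{lem:yq}. Since $(yt^m)_\infty=(1-yt^m)(yqt^m)_\infty$, equating coefficients of $t^n$ in $\sum_n P_n(x,y)t^n/(q)_n=(1-yt^m)\sum_n P_n(x,yq)t^n/(q)_n$ gives
\[
P_n(x,y) = P_n(x,yq) - y\frac{(q)_n}{(q)_{n-m}}P_{n-m}(x,yq).
\]
Iterating this $m-a$ times raises the second argument from $yq^a$ to $yq^m$ at the cost of introducing terms whose index is lowered by multiples of $m$, down to $m(m-a)$; this is precisely the mechanism that will manufacture the factors of $m^2$.

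Next comes the analog of Lemma~\ref{lem:rec1}. Expanding $(yt^m)_\infty$ by the $q$-binomial theorem gives $P_n(x,y)=\sum_{k\ge0}\frac{(q)_n}{(q)_k(q)_{n-mk}}(-1)^kq^{\binom k2}y^kp_{n-mk}(x)$. The ``fat'' coefficient $(q)_{n+1}/((q)_k(q)_{n+1-mk})$ admits no Pascal rule as clean as $\qbinom{n+1}{k}=\qbinom{n}{k-1}+q^k\qbinom nk$; instead I would use the splitting
\[
\frac{(q)_{n+1}}{(q)_k(q)_{n+1-mk}} = q^{mk}\frac{(q)_n}{(q)_k(q)_{n-mk}} + \frac{(q)_n(1-q^{mk})}{(q)_k(q)_{n+1-mk}}.
\]
The first summand, after applying the three-term recurrence $p_{n+1-mk}=(Ax-b_{n-mk})p_{n-mk}-\lambda_{n-mk}p_{n-1-mk}$ and using that $b_{n-mk}$ and $\lambda_{n-mk}/(1-q^{n-mk})$ are polynomials in $q^{-mk}=(q^k)^{-m}$ of degrees $r$ and $s$, produces $Ax\,P_n(x,yq^m)$ together with $-\sum_{j=0}^r c_jq^{nj}P_n(x,yq^{m(1-j)})$ and $-(1-q^n)\sum_{j=0}^s d_jq^{nj}P_{n-1}(x,yq^{m(1-j)})$, exactly as in Lemma~\ref{lem:rec1} but with every exponent $1-j$ replaced by $m(1-j)$. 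The second summand is the genuinely new contribution: writing $1-q^{mk}=(1-q^k)\sum_{i=0}^{m-1}q^{ik}$ and reindexing $k\mapsto k+1$, one finds that it equals
\[
-y\frac{(q)_n}{(q)_{n+1-m}}\sum_{i=0}^{m-1}q^i P_{n+1-m}(x,yq^{i+1}),
\]
which collapses to the familiar $-yP_n(x,yq)$ when $m=1$. I expect recognizing this expression as a combination of $P_{n+1-m}$ at the shifted arguments $yq,\dots,yq^m$ to be the main obstacle, since it is the one step with no direct $m=1$ template and it is what forces the floor $m^2+1$.

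Finally I would assemble the recurrence and count its length. Using the iterated form of the first relation, I raise every argument on the right-hand side -- $yq^m$ (the $Ax$ term), $yq^{m(1-j)}$ (the $b$- and $\lambda$-sums), and $yq^{i+1}$ (the new term) -- up to the common argument $yq^m$, and I also raise the left-hand $P_{n+1}(x,y)$ to argument $yq^m$ by $m$ applications of the same relation. Since every right-hand term has index at most $n$ while the raised left side has leading term $P_{n+1}(x,yq^m)$ with coefficient $1$, I can solve for $P_{n+1}(x,yq^m)$; replacing $y$ by $y/q^m$ then yields a recurrence for $P_n(x,y)$ at a single argument. The lowest index reached is $n-m^2r$ from the $b$-sum (each degree-$j$ term needs $mj$ raisings, dropping the index by $m^2j$), $n-1-m^2s$ from the $\lambda$-sum, and $n+1-m^2$ from both the new term and the raised left side; these give recurrences of lengths $m^2r+2$, $m^2s+3$, and $m^2+1$ respectively, so the order is $\max(rm^2+2,sm^2+3,m^2+1)$, as claimed. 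The value $m^2+1$ is thus the floor coming from the factor $(yt^m)_\infty$ alone, generalizing the $5$-term recurrence for $\hat H_n$ in the case $m=2$, $r=s=0$.
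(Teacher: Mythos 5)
Your proposal is correct and takes essentially the approach the paper intends: the paper omits the proof of Theorem~\ref{thm:main_gen}, saying only that it is similar to that of Theorem~\ref{thm:main}, and you carry out exactly that generalization --- the analog of Lemma~\ref{lem:yq} from $(yt^m)_\infty=(1-yt^m)(yqt^m)_\infty$, the analog of Lemma~\ref{lem:rec1} via the $q$-binomial expansion together with your splitting of $\frac{(q)_{n+1}}{(q)_k(q)_{n+1-mk}}$ (which is the correct substitute for the Pascal rule, and your identification of the new term as $-y\frac{(q)_n}{(q)_{n+1-m}}\sum_{i=0}^{m-1}q^iP_{n+1-m}(x,yq^{i+1})$ checks out), and the same raise-to-a-common-argument assembly. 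The index bookkeeping correctly yields $d=\max(rm^2+2,sm^2+3,m^2+1)$.
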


\section{Discrete big $q$-Hermite polynomials}
\label{sec:discrete-big-q}

In this section we study a set of polynomials which satisfy a 4-term recurrence relation, 
called the discrete big $q$-Hermite polynomials (see Definition~\ref{defn:big}). These 
polynomials generalize the  discrete $q$-Hermite polynomials and appear
in Example~\ref{ex:dqh1}.

Recall \cite{Is} that the \emph{continuous $q$-Hermite polynomials} $H_n(x|q)$
are defined by
\[
\sum_{n=0}^\infty \frac{H_n(x|q)}{(q)_n} t^n =
\frac{1}{(te^{i\theta},te^{-i\theta})_\infty},
\]
and the \emph{continuous big $q$-Hermite polynomials} $H_n(x;a|q)$ are
defined by
\[
\sum_{n=0}^\infty \frac{H_n(x;a|q)}{(q)_n} t^n =
\frac{(at)_\infty}{(te^{i\theta},te^{-i\theta})_\infty}.
\]
Observe that the generating function for $H_n(x;a|q)$ is the
generating function for $H_n(x|q)$ multiplied by $(at)_\infty$.  In
this section we introduce \emph{discrete big $q$-Hermite polynomials}
in an analogous way.

The \emph{discrete $q$-Hermite I polynomials} $h_n(x;q)$ have generating
function
\[
\sum_{n=0}^\infty \frac{h_n(x;q)}{(q;q)_n} t^n
= \frac{(t^2;q^2)_\infty}{(xt)_\infty}.
\]
\begin{defn}
\label{defn:big}
The \emph{discrete big $q$-Hermite I polynomials}
$h_n(x,y;q)$ are given by
\begin{equation}
\label{eq:hn}
\sum_{n=0}^\infty h_n(x,y;q) \frac{t^n}{(q;q)_n}
=  \frac{(t^2;q^2)_\infty (yt)_\infty}{(xt)_\infty}.
\end{equation}
\end{defn}

Expanding the right hand side of \eqref{eq:hn} using the $q$-binomial theorem, we
find the following expression for $h_n(x,y;q)$.

\begin{prop}
For $n\ge 0,$
\[
h_n(x,y;q) = \sum_{k=0}^{\flr{n/2}} 
\qbinom{n}{2k}    (q;q^2)_k q^{2\binom k2}
(-1)^k x^{n-2k} (y/x;q)_{n-2k}.
\]
\end{prop}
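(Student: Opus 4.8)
```latex
The plan is to expand the right-hand side of the generating function
\eqref{eq:hn} as a product of three infinite products, each via the
$q$-binomial theorem, and then extract the coefficient of
$t^n/(q;q)_n$. Concretely, I would write
$(t^2;q^2)_\infty = \sum_{k\ge0} \frac{(-1)^k q^{2\binom k2}}{(q^2;q^2)_k} t^{2k}$,
the factor $(yt)_\infty = \sum_{j\ge0} \frac{(-1)^j q^{\binom j2}}{(q;q)_j} (yt)^j$,
and $1/(xt)_\infty = \sum_{\ell\ge0} \frac{(xt)^\ell}{(q;q)_\ell}$, using
\cite[(II.1),(II.2)]{GR}. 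The middle factor $(yt)_\infty$ and the last
factor $1/(xt)_\infty$ have the \emph{same} base $t$, so their product is
most efficiently combined first into a single power series in $t$; the
natural move is to recognize that the product
$(yt)_\infty/(xt)_\infty$ is itself summed by the $q$-binomial theorem
\cite[(II.3)]{GR} as $\sum_{i\ge0} \frac{(y/x)_i}{(q)_i} (xt)^i$. This
collapses two sums into one and already produces the factor
$(y/x;q)_{n-2k}$ that appears in the claimed formula.

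After that reduction, I would multiply by the $(t^2;q^2)_\infty$ series
and collect the coefficient of $t^n$. Setting the exponent $n = 2k + i$,
so that $i = n-2k$ and $k$ ranges over $0 \le k \le \lfloor n/2\rfloor$,
the coefficient of $t^n$ becomes
\[
\sum_{k=0}^{\flr{n/2}} \frac{(-1)^k q^{2\binom k2}}{(q^2;q^2)_k}
\cdot \frac{(y/x)_{n-2k}}{(q)_{n-2k}} x^{n-2k}.
\]
Multiplying through by $(q;q)_n$ to convert to the normalization
$h_n(x,y;q)$ in \eqref{eq:hn}, the $q$-factorials reorganize into the
$q$-binomial-type coefficient claimed in the statement. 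The key
bookkeeping step is the identity
$\frac{(q;q)_n}{(q^2;q^2)_k (q;q)_{n-2k}} = \qbinom{n}{2k}(q;q^2)_k$,
which one verifies by writing $(q^2;q^2)_k = (q;q)_{2k}/(q;q^2)_k$ and
recognizing $(q;q)_n/((q;q)_{2k}(q;q)_{n-2k}) = \qbinom{n}{2k}$.

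The main obstacle, such as it is, is precisely that last manipulation of
$q$-factorials: one must carefully track the even-base Pochhammer symbol
$(q^2;q^2)_k$ against the ordinary $(q;q)$ symbols and confirm that the
spurious factor is exactly $(q;q^2)_k$ rather than some neighboring
product. Everything else is a routine application of known summations, so
the proof is short once the $q$-binomial theorem has been invoked in the
combined form for $(yt)_\infty/(xt)_\infty$. I would present the
derivation as a single chain of equalities, inserting the factorial
identity as the final line, and note that the sign $(-1)^k$ and the power
$q^{2\binom k2}$ carry over unchanged from the $(t^2;q^2)_\infty$
expansion.
```
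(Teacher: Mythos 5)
Your proposal is correct and follows exactly the paper's (unwritten) argument: the paper also obtains this formula by expanding the right-hand side of \eqref{eq:hn} via the $q$-binomial theorem, with your combination of $(yt)_\infty/(xt)_\infty$ into $\sum_{i\ge0}\frac{(y/x;q)_i}{(q;q)_i}(xt)^i$ being the intended use of \cite[(II.3)]{GR}, and the bookkeeping identity $(q;q)_{2k}=(q;q^2)_k(q^2;q^2)_k$ closing the computation. Nothing is missing.
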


The polynomials $h_n(x,y;q)$ are orthogonal polynomials in neither $x$
nor $y$. However they satisfy the following simple 4-term recurrence
relation which was established in Example~\ref{ex:dqh1}.

\begin{thm}
\label{thm:4term}
For $n\ge 0,$
\[
h_{n+1}(x,y;q) = (x-yq^{n}) h_n (x,y;q) -q^{n-1}(1-q^n)h_{n-1}(x,y;q)
+yq^{n-2}(1-q^n)(1-q^{n-1})h_{n-2}(x,y;q).
\]  
\end{thm}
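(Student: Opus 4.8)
The plan is to recognize Theorem~\ref{thm:4term} as a direct instance of Theorem~\ref{thm:main}, so that the only genuine work is extracting the explicit recurrence coefficients. Comparing the defining generating function \eqref{eq:hn} with that of the discrete $q$-Hermite I polynomials $h_n(x;q)$, one sees immediately that
\[
\sum_{n=0}^\infty h_n(x,y;q)\frac{t^n}{(q)_n}
= (yt)_\infty \sum_{n=0}^\infty h_n(x;q)\frac{t^n}{(q)_n},
\]
which is exactly the relation defining $P^{(1)}_n(x,y)$ in Theorem~\ref{thm:main} with $p_n(x)=h_n(x;q)$. It therefore suffices to run the machinery of that theorem on these input polynomials and identify $h_n(x,y;q)=P^{(1)}_n(x,y)$, as was already announced in Example~\ref{ex:dqh1}.

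First I would record the three-term recurrence of the discrete $q$-Hermite I polynomials, namely $A=1$, $b_n=0$, and $\lambda_n=q^{n-1}(1-q^n)$. Reading these against the hypotheses of Theorem~\ref{thm:main}, the coefficient $b_k=0$ is a polynomial in $q^k$ of degree $r=-\infty$, while $\lambda_k/(1-q^k)=q^{-1}q^k$ is a polynomial in $q^k$ of degree $s=1$ with $d_0=0$ and $d_1=q^{-1}$. Theorem~\ref{thm:main} then guarantees a recurrence of order $d=\max(r+2,s+3)=4$, which already delivers the qualitative statement; the rest is computation.

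To pin down the coefficients I would feed these data into Lemma~\ref{lem:rec1}. Because $b_n\equiv0$ the middle sum drops out entirely, and only the $j=1$ term of the $\lambda$-sum survives, giving
\[
P^{(1)}_{n+1}(x,y) = (x-y)P^{(1)}_n(x,yq) - q^{n-1}(1-q^n)P^{(1)}_{n-1}(x,y).
\]
The two surviving terms sit at the mismatched shifts $yq$ and $y$, so next I would apply Lemma~\ref{lem:yq} to rewrite $P^{(1)}_{n-1}(x,y)$, and likewise the left-hand $P^{(1)}_{n+1}(x,y)$, in terms of values at argument $yq$. After collecting everything at the common argument $yq$ and solving for $P^{(1)}_{n+1}(x,yq)$, the coefficient of $P^{(1)}_n(x,yq)$ should telescope to $x-yq^{n+1}$, producing a genuine 4-term relation.

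Finally, replacing $y$ by $y/q$ normalizes all arguments back to $y$ and should yield
\[
h_{n+1}(x,y;q) = (x-yq^{n})h_n(x,y;q) - q^{n-1}(1-q^n)h_{n-1}(x,y;q) + yq^{n-2}(1-q^n)(1-q^{n-1})h_{n-2}(x,y;q),
\]
as claimed. The main obstacle is thus not conceptual but the careful tracking of the $y$-shifts $yq^{1-j}$ through Lemma~\ref{lem:yq} and the concluding substitution $y\mapsto y/q$, where sign and exponent errors are most likely to creep in; everything else is dictated by Theorem~\ref{thm:main}.
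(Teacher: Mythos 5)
Your proposal is correct and follows exactly the paper's own route: the paper proves Theorem~\ref{thm:4term} in Example~\ref{ex:dqh1} by identifying $h_n(x,y;q)=P^{(1)}_n(x,y)$ with $p_n(x)=h_n(x;q)$, $A=1$, $b_n=0$, $\lambda_n=q^{n-1}(1-q^n)$ (so $r=-\infty$, $s=1$), and running Lemmas~\ref{lem:rec1} and~\ref{lem:yq} followed by the substitution $y\mapsto y/q$, just as you do, and your intermediate coefficients (e.g.\ $x-yq^{n+1}$ before rescaling) check out. The only difference is cosmetic: the paper leaves the coefficient-tracking implicit by analogy with its preceding examples, whereas you carry it out explicitly (the paper separately gives a combinatorial proof of the rescaled version in Proposition~\ref{prop:4-term*}, but that is not its proof of this theorem).
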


Note that when $y=0$, the 4-term recurrence relation reduces to the 3-term
recurrence relation for the discrete $q$-Hermite I polynomials.  The polynomials
$h_n(x,y;q)$ are not symmetric in $x$ and $y$. If we consider $h_n(x,y;q)$ as
a polynomial in $y$, then it does not satisfy a finite term recurrence relation,
see Proposition~\ref{prop:rr_HH}.

Since $h_n(x,y;q)$ satisfies a 4-term recurrence, it is a multiple orthogonal
polynomial in $x.$ Thus there are two linear functionals $\LL^{(0)}$ and $\LL^{(1)}$ such that,
for $i\in\{0,1\}$, 
\[
\LL^{(i)}(h_m)=\delta_{mi}, \quad m \ge 0,
\]
\[
\LL^{(i)}(h_m (x,y;q) h_n (x,y;q)) = 0 \quad \mbox{if $m>2n+i$, and} \quad
\LL^{(i)}(h_{2n+i}(x,y;q) h_n (x,y;q)) \ne 0.
\]

We have explicit formulas for the moments for $\LL^{(0)}$ and $\LL^{(1)}$.

\begin{thm}
\label{thm:hermite_moments}
The moments for the discrete big $q$-Hermite polynomials are
\[
\LL^{(0)}(x^n) = \sum_{k=0}^{\flr{n/2}}  \qbinom{n}{2k} (q;q^2)_k y^{n-2k},
\]
\[
\LL^{(1)}(x^n) = (1-q^n) \sum_{k=0}^{\flr{n/2}}  \qbinom{n-1}{2k} (q;q^2)_k y^{n-2k-1}.
\]
\end{thm}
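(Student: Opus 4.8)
The plan is to read off the two moment sequences directly from the defining generating function \eqref{eq:hn}, exploiting the fact that $\LL^{(0)}$ and $\LL^{(1)}$ annihilate all but one element of the basis $\{h_n(x,y;q)\}$. Since $h_n(x,y;q)$ is monic of degree $n$ in $x$, we may expand $x^n=\sum_{k=0}^n a_{n,k}\,h_k(x,y;q)$ uniquely, and then $\LL^{(i)}(h_k)=0$ for $k\neq i$ gives
\[
\LL^{(i)}(x^n)=a_{n,i}\,\LL^{(i)}(h_i).
\]
So the entire computation reduces to the change of basis between $\{x^n\}$ and $\{h_n\}$, i.e.\ to inverting \eqref{eq:hn}.

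First I would package the inversion as an identity of formal power series in $t$. The $q$-binomial theorem gives $\sum_{n\ge0}x^n t^n/(q)_n = 1/(xt)_\infty$, while \eqref{eq:hn} reads $\sum_{m\ge0}h_m(x,y;q)\,t^m/(q)_n = (t^2;q^2)_\infty(yt)_\infty/(xt)_\infty$. Multiplying the latter by $1/\big((t^2;q^2)_\infty(yt)_\infty\big)$ recovers the former, so
\[
\sum_{n\ge0}x^n\frac{t^n}{(q)_n}
=\frac{1}{(t^2;q^2)_\infty\,(yt)_\infty}\sum_{m\ge0}h_m(x,y;q)\frac{t^m}{(q)_m}.
\]
Applying $\LL^{(i)}$ term by term (legitimate at the level of formal power series in $t$, the coefficients being the moments) and using $\LL^{(i)}(h_m)=0$ for $m\neq i$, only the $m=i$ term survives, whence
\[
\sum_{n\ge0}\LL^{(i)}(x^n)\frac{t^n}{(q)_n}
=\frac{\LL^{(i)}(h_i)}{(q)_i}\cdot\frac{t^{\,i}}{(t^2;q^2)_\infty\,(yt)_\infty}.
\]
Both moment generating functions are now explicit, and the normalization of the functional enters only through the scalar $\LL^{(i)}(h_i)/(q)_i$.

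It then remains to extract the coefficient of $t^n/(q)_n$. Expanding the two infinite products by the $q$-binomial theorem,
\[
\frac{1}{(t^2;q^2)_\infty}=\sum_{k\ge0}\frac{t^{2k}}{(q^2;q^2)_k},
\qquad
\frac{1}{(yt)_\infty}=\sum_{j\ge0}\frac{(yt)^j}{(q)_j},
\]
multiplying, and using $(q)_{2k}=(q;q^2)_k(q^2;q^2)_k$ together with $\qbinom{n}{2k}=(q)_n/\big((q)_{2k}(q)_{n-2k}\big)$, the coefficient extraction collapses to the stated sum: for $i=0$ this yields $\LL^{(0)}(x^n)=\sum_{k}\qbinom{n}{2k}(q;q^2)_k\,y^{n-2k}$ directly. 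For $i=1$ the extra factor $t$ shifts the index to $n-1$, and the prefactor $(q)_n/(q)_{n-1}=1-q^n$ produces precisely $(1-q^n)\sum_{k}\qbinom{n-1}{2k}(q;q^2)_k\,y^{n-2k-1}$.

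The series manipulation is essentially mechanical; the one genuine point to get right is the separation of the even base $(q^2;q^2)$ from the full base $(q;q)$, i.e.\ the identity $(q)_{2k}=(q;q^2)_k(q^2;q^2)_k$, which is exactly what turns $1/(q^2;q^2)_k$ into $\qbinom{n}{2k}(q;q^2)_k/(q)_n$. I expect the normalization bookkeeping, rather than this computation, to be the only delicate step: one must pin down the scalar $\LL^{(i)}(h_i)/(q)_i$ so that it equals $1$, which I would verify from $h_0=1$ and $h_1=x-y$ together with the orthogonality relations $\LL^{(i)}(h_mh_n)=0$ recorded before the theorem, since these determine $\LL^{(0)}$ and $\LL^{(1)}$ uniquely.
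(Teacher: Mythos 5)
Your route is genuinely different from the paper's. The paper bootstraps through the discrete $q$-Hermite polynomials $h_n(x;q)$: it uses the connection formula $h_k(x;q)=\sum_{m}\qbinom{k}{m}y^{k-m}h_m(x,y;q)$, the mixed moments $\LL_h(x^n h_m(x;q))$ of Proposition~\ref{prop:xnh} (whose proof requires a $q$-Vandermonde evaluation), the orthogonality \eqref{eq:orthogonality}, and assembles these via Theorem~\ref{thm:op_mop}. You instead expand $x^n$ directly in the basis $\{h_m(x,y;q)\}$ by inverting the generating function \eqref{eq:hn}, obtaining
\[
\sum_{n\ge0}\LL^{(i)}(x^n)\frac{t^n}{(q)_n}
=\frac{\LL^{(i)}(h_i)}{(q)_i}\cdot\frac{t^{i}}{(t^2;q^2)_\infty(yt)_\infty},
\]
and then read off coefficients. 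This is legitimate (each coefficient of $t^n$ is a finite linear combination of the $h_m$'s, so applying $\LL^{(i)}$ termwise is harmless) and is more self-contained than the paper's proof: it needs only the $q$-binomial theorem and the identity $(q)_{2k}=(q;q^2)_k(q^2;q^2)_k$. For $i=0$ your argument is complete and yields the stated formula.

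For $i=1$, however, the step you postponed is precisely where the proof breaks, and it cannot be completed as you suggest. Under the normalization you cite, $\LL^{(1)}(h_1)=1$, so your scalar is $\LL^{(1)}(h_1)/(q)_1=1/(1-q)$, not $1$; carried through faithfully, your computation gives
\[
\LL^{(1)}(x^n)=\frac{1-q^n}{1-q}\sum_{k=0}^{\flr{n/2}}\qbinom{n-1}{2k}(q;q^2)_k\,y^{n-2k-1},
\]
which exceeds the printed statement by the factor $1/(1-q)$. A check at $n=1$ confirms this: from $h_1=x-y$, the conditions $\LL^{(1)}(h_1)=1$ and $\LL^{(1)}(h_0)=0$ force $\LL^{(1)}(x)=1$, whereas the theorem asserts $\LL^{(1)}(x)=1-q$. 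You should know this is not a defect peculiar to your method: assembling the paper's own ingredients (Theorem~\ref{thm:op_mop} with $c_{k,m}=\qbinom{k}{m}y^{k-m}$, Proposition~\ref{prop:xnh}, and \eqref{eq:orthogonality}) under the stated normalization produces exactly the same extra factor, so the theorem's second display is consistent only with the alternative normalization $\LL^{(1)}(h_1)=(q)_1$. In other words, your computation is correct and equivalent in content to the paper's; the single false step in your write-up is the assertion that $\LL^{(1)}(h_1)/(q)_1=1$, which you would have discovered upon performing the verification you deferred.
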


Before proving Theorem~\ref{thm:hermite_moments} we show 
that in general there is a way to find the linear functionals of $d$-orthogonal polynomials 
if we know how to expand certain orthogonal polynomials in terms of these
$d$-orthogonal polynomials. This is similar to
Proposition~\ref{prop:bootstrap}.

\begin{thm}
\label{thm:op_mop}
Let $R_n(x)$ be orthogonal polynomials with linear functionals $\LL_R$ such
that $\LL_R(1) = 1$.  Let $S_n(x)$ be $d$-orthogonal polynomials with linear
functionals $\{\LL_S^{(i)}\}_{i=0}^{d-1}$ such that $\LL_S^{(i)}(S_n(x)) =
\delta_{n,i}$.  
Suppose
\begin{equation}
\label{eq:conncoef}
R_k(x) = \sum_{m=0}^k c_{km} S_m(x).  
\end{equation}
Then
\[
\LL_S^{(i)}(x^n) = \sum_{k=0}^n \frac{\LL_R(x^n R_k(x))}{\LL_R(R_k(x)^2)}
d_{k,i},
\]
where
\[
d_{k,i}=
\begin{cases}
c_{k,i} {\text{ if }} k\ge i,\\
0 {\text{    \quad if }} k<i.
\end{cases}
\]

\end{thm}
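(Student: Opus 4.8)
The plan is to follow the same two-move strategy as in the proof of Proposition~\ref{prop:bootstrap}, replacing the role of the $L^2$-norms of $S_m$ by the defining biorthogonality $\LL_S^{(i)}(S_m) = \delta_{m,i}$. First I would expand the monomial $x^n$ in the orthogonal basis furnished by the $R_k$. Because the $R_k$ are orthogonal with respect to $\LL_R$, the polynomials $R_0, R_1, \dots, R_n$ span the space of polynomials of degree at most $n$, and the coefficient of $R_k$ is extracted by orthogonality, giving
\[
x^n = \sum_{k=0}^n \frac{\LL_R(x^n R_k(x))}{\LL_R(R_k(x)^2)} R_k(x),
\]
exactly as in the earlier proposition.

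Next I would apply the functional $\LL_S^{(i)}$ to both sides and use linearity to obtain
\[
\LL_S^{(i)}(x^n) = \sum_{k=0}^n \frac{\LL_R(x^n R_k(x))}{\LL_R(R_k(x)^2)} \LL_S^{(i)}(R_k(x)).
\]
The whole content of the theorem then reduces to evaluating $\LL_S^{(i)}(R_k(x))$. Substituting the connection coefficient formula \eqref{eq:conncoef} and invoking $\LL_S^{(i)}(S_m) = \delta_{m,i}$ gives
\[
\LL_S^{(i)}(R_k(x)) = \sum_{m=0}^k c_{km} \LL_S^{(i)}(S_m(x)) = \sum_{m=0}^k c_{km}\delta_{m,i},
\]
which collapses to $c_{k,i}$ when $i \le k$ and to $0$ when $i > k$. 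This is precisely the quantity $d_{k,i}$ in the statement, so substituting back completes the argument.

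The computation is essentially routine, and I do not anticipate a genuine obstacle. The one point deserving care is the boundary case $k < i$: the connection coefficient $c_{k,i}$ simply does not appear in the finite expansion \eqref{eq:conncoef} (whose sum runs only up to $m=k$), so the term $\delta_{m,i}$ is never hit and $\LL_S^{(i)}(R_k(x))$ vanishes. This is exactly why the piecewise definition of $d_{k,i}$ is needed, and it is the reason the upper summation index $n$ may safely exceed $i$ without introducing undefined coefficients. Keeping this bookkeeping straight is the only subtlety, and it is a matter of notation rather than of mathematical difficulty.
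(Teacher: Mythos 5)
Your proof is correct and follows essentially the same route as the paper: expand $x^n$ in the $R_k$ basis via $\LL_R$-orthogonality, apply $\LL_S^{(i)}$, and evaluate $\LL_S^{(i)}(R_k(x)) = d_{k,i}$ from the connection formula and the biorthogonality $\LL_S^{(i)}(S_m) = \delta_{m,i}$. Your explicit attention to the boundary case $k < i$ is a nice touch of bookkeeping that the paper leaves implicit in its definition of $d_{k,i}$.
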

\begin{proof}
If we apply $\LL_S^{(i)}$ to both sides of \eqref{eq:conncoef}, we have
\[
\LL_S^{(i)}(R_k(x)) = d_{k,i}.
\]
Then by expanding $x^n$ in terms of $R_k(x)$ we get
\begin{align*}
\LL_S^{(i)}(x^n) = \LL_S^{(i)}\left( \sum_{k=0}^n 
\frac{\LL_R(x^n R_k(x))}{\LL_R(R_k(x)^2)} R_k(x)\right)
= \sum_{k=0}^n \frac{\LL_R(x^n R_k(x))}{\LL_R(R_k(x)^2)}d_{k,i}.
\end{align*}
\end{proof}

We will apply Theorem~\ref{thm:op_mop} 
with $R_n(x) = h_n(x;q)$ and $S_n(x) = h_n(x,y;q)$ 
to prove Theorem~\ref{thm:hermite_moments}.

The first ingredient is \eqref{eq:conncoef}, which 
follows from the generating function \eqref{eq:hn}
\[
h_k(x;q) = \sum_{m=0}^k \qbinom{k}{m} y^{k-m} h_m(x,y;q).
\]

The second ingredient is the value of $\LL_h(x^nh_k).$

\begin{prop}\label{prop:xnh}
Let $\LL_h$ be the linear functional for $h_n(x;q)$ with $\LL_h(1)=1$. Then
\[
\LL_h(x^n h_{m}(x;q)) = 
\begin{cases} 0 {\text{ if }} m>n {\text{ or }}n\not\equiv m\mod 2,\\
\frac{q^{\binom{m}{2}}(q)_n}{(q^2;q^2)_{\frac{n-m}{2}}} {\text{ if }} 
n\ge m, n\equiv m\mod 2.
\end{cases}
\]
\end{prop}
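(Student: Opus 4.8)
The plan is to expand $x^n$ in the basis $\{h_m(x;q)\}_{m\ge0}$, then apply $\LL_h$ and collapse the resulting sum by orthogonality; this is the same strategy as Proposition~\ref{prop:bootstrap}, but here the needed expansion falls out directly from the generating function. I first rewrite the defining generating function as
\[
\frac{1}{(xt;q)_\infty} = \frac{1}{(t^2;q^2)_\infty}\sum_{m=0}^\infty \frac{h_m(x;q)}{(q;q)_m}\,t^m.
\]
Applying the $q$-binomial theorem to both $1/(xt;q)_\infty = \sum_n x^n t^n/(q;q)_n$ and $1/(t^2;q^2)_\infty = \sum_k t^{2k}/(q^2;q^2)_k$, and equating coefficients of $t^n$, I obtain the expansion
\[
x^n = (q;q)_n \sum_{0\le 2k\le n} \frac{h_{n-2k}(x;q)}{(q^2;q^2)_k\,(q;q)_{n-2k}}.
\]

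Next I record the squared norm. The polynomials $h_n(x;q)$ are monic (the recurrence from Example~\ref{ex:dqh1} with $y=0$ has $A=1$) and satisfy $h_{n+1}(x;q) = x\,h_n(x;q) - \lambda_n h_{n-1}(x;q)$ with $\lambda_n = q^{n-1}(1-q^n)$, so the standard formula $\LL_h(h_m(x;q)^2) = \prod_{i=1}^m \lambda_i$ yields
\[
\LL_h(h_m(x;q)^2) = \prod_{i=1}^m q^{i-1}(1-q^i) = q^{\binom{m}{2}}(q;q)_m.
\]

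Finally, I multiply the expansion of $x^n$ by $h_m(x;q)$ and apply $\LL_h$. Orthogonality kills every term except the one with $n-2k=m$: when $0\le m\le n$ and $n\equiv m \pmod 2$ this forces $k=(n-m)/2$ and gives
\[
\LL_h(x^n h_m(x;q)) = \frac{(q;q)_n}{(q^2;q^2)_{(n-m)/2}\,(q;q)_m}\cdot q^{\binom{m}{2}}(q;q)_m = \frac{q^{\binom{m}{2}}(q;q)_n}{(q^2;q^2)_{(n-m)/2}},
\]
while if $m>n$ or $n\not\equiv m \pmod 2$ no index $n-2k$ can equal $m$ and the moment vanishes, reproducing the two cases of the statement. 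There is no genuine obstacle here; the entire argument is mechanical once the expansion is in hand, and the only point requiring care is the bookkeeping of the admissible range of $k$ together with the parity condition that distinguishes the vanishing case from the explicit one.
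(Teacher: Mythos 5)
Your proof is correct, and it takes a genuinely different route from the paper's. The paper argues forward from the explicit hypergeometric representation $h_m(x;q)=x^m\,{}_2\phi_0\!\left(q^{-m},q^{-m+1};-;q^2,q^{2m-1}/x^2\right)$ together with the known moments $\LL_h(x^{2k})=(q;q^2)_k$ and $\LL_h(x^{2k+1})=0$: applying $\LL_h$ term by term produces a ${}_2\phi_1$, which is then evaluated by the $q$-Vandermonde theorem. You instead invert the generating function to expand the monomial in the $h$-basis, $x^n=(q;q)_n\sum_{0\le 2k\le n} h_{n-2k}(x;q)/\bigl((q^2;q^2)_k(q;q)_{n-2k}\bigr)$, and finish with orthogonality and the norm $\LL_h(h_m(x;q)^2)=q^{\binom m2}(q;q)_m$, which you correctly obtain as $\lambda_1\cdots\lambda_m$ from the three-term recurrence of Example~\ref{ex:dqh1} with $y=0$ (this is \eqref{eq:orthogonality} in the paper, quoted there without proof). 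Your route needs only the $q$-exponential special case of the $q$-binomial theorem and the standard norm product formula for monic orthogonal polynomials --- no hypergeometric summation and no prior knowledge of the moment sequence $\LL_h(x^k)$ --- and it disposes of the vanishing cases ($m>n$ or parity mismatch) automatically, since no admissible index $k$ survives in the sum. What the paper's computation buys in exchange is independence from the norm evaluation: it runs entirely off the moments of $\LL_h$. Both external inputs are standard facts about the discrete $q$-Hermite I polynomials, so neither argument is circular.
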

\begin{proof} Clearly we may assume that $n\ge m$ and $n\equiv m\mod 2.$
Using the explicit formula
\[
h_m(x;q) =x^m \hyper20{q^{-m},q^{-m+1}}{-}{q^2, \frac{q^{2m-1}}{x^2}},
\]
and the fact 
\[
\LL_h(x^k)=
\begin{cases}
0 {\text{\qquad\qquad if $k$ is odd,}}\\
(q;q^2)_{k/2} {\text{ if $k$ is even,}} 
\end{cases}
\] 
we obtain
\[
\LL_h(x^n h_{m}(x;q)) = (q;q^2)_{\frac{n+m}2}
\hyper21{q^{-m},q^{-m+1}}{q^{-n-m+1}}{q^2,q^{m-n}},
\]
\[
\LL_h(x^n h_{m}(x;q)) = (q;q^2)_{\frac{n+m}2}
\]
which is evaluable by the $q$-Vandermonde theorem 
\cite[(II.5), p, 354]{GR}.
\end{proof}

The discrete $q$-Hermite polynomials have the following orthogonality:
\begin{equation}
\label{eq:orthogonality}
\LL_h(h_m(x;q) h_n(x;q)) 
= q^{\binom n2} (q)_n \delta_{mn}.
\end{equation}

Using Theorem~\ref{thm:op_mop}, Proposition~\ref{prop:xnh}, and
\eqref{eq:orthogonality} we have proven 
Theorem~\ref{thm:hermite_moments}. We do 
not know representing measures for the moments in 
Theorem~\ref{thm:hermite_moments}.

One may also find a recurrence relation for $h_n(x,y;q)$ as a 
polynomial in $y$, whose proof is routine.

\begin{prop} 
\label{prop:rr_HH}
For $n\ge 0$, we have
\[
yq^nh_n(x,y;q)=-h_{n+1}(x,y;q)+
\sum_{k=0}^n (q^n;q^{-1})_k  (-1)^k h_{n-k}(x,y,;q)
\times\begin{cases} x {\text{ if $k$ is even}}\\
1 {\text{ if $k$ is odd.}}
\end{cases}
\]
\end{prop}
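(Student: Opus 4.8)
The plan is to derive the $y$-recurrence for $h_n(x,y;q)$ directly from the generating function in \eqref{eq:hn}, applying to it the same $q$-difference technique used in Lemma~\ref{lem:yq}. The key observation is that the generating function factors as $G(t) = (t^2;q^2)_\infty (yt)_\infty/(xt)_\infty$, and the only $y$-dependence sits in the single factor $(yt)_\infty$. Since $(yt)_\infty = (1-yt)(yqt)_\infty$, the generating functions at parameters $y$ and $yq$ are related by $G_y(t) = (1-yt) G_{yq}(t)$. This is exactly the structure exploited in Lemma~\ref{lem:yq}, and equating coefficients of $t^n/(q)_n$ would give $h_n(x,y;q) = h_n(x,yq;q) - y(1-q^n)h_{n-1}(x,yq;q)$. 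However, the target statement is a recurrence purely in $y$ (not $yq$) with the shift operator $y \mapsto yq^{\pm 1}$ absorbed into the coefficients, so a second ingredient is needed.

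First I would record the action of the operator that shifts $t \mapsto qt$ inside the factor $(yt)_\infty$, or equivalently extract a $q$-difference equation in $y$. Concretely, the cleanest route is to note that multiplying $G_y(t)$ by $yt$ and using $yt\,(yt)_\infty$-type identities produces the factor $1 - yq^n t$ shifts; combining the two relations $(yt)_\infty = (1-yt)(yqt)_\infty$ will let me express $yq^n h_n(x,y;q)$ as a combination of $h_{n+1}$, $h_n$, and lower terms. The appearance of the Pochhammer coefficient $(q^n;q^{-1})_k = (1-q^n)(1-q^{n-1})\cdots(1-q^{n-k+1})$ strongly suggests that the clean way to produce it is to \emph{iterate} the single-step relation $h_n(x,y) - h_n(x,yq) = -y(1-q^n)h_{n-1}(x,yq)$, telescoping the $yq^j$ arguments back to $y$; each iteration peels off one factor $(1-q^{n-j})$, which is precisely how $(q^n;q^{-1})_k$ accumulates. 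The alternating $x$-versus-$1$ distinction between even and odd $k$ should emerge from the $(t^2;q^2)_\infty$ factor (which contributes only even powers, hence the $x$) interacting with the $1/(xt)_\infty$ factor.

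The main obstacle will be bookkeeping the simultaneous shifts in the $y$-argument: the one-step Lemma~\ref{lem:yq}-type identity relates $h_n(x,y)$ to $h_{n-1}(x,yq)$, so repeatedly applying it generates a cascade of polynomials evaluated at $y, yq, yq^2, \dots$, and these must all be collapsed back to argument $y$ before the stated recurrence (which has every term at the common argument $y$) can be read off. The even/odd case split in the summand indicates that the underlying $q$-difference equation in $y$ is genuinely second-order-like in $t$ because of the $(t^2;q^2)_\infty$ numerator, so I expect to need a $t \mapsto qt$ functional equation for $G_y$ that mixes $t$ and $t^2$ scalings. Managing this telescoping cleanly—rather than by brute-force coefficient extraction—is where the real work lies; once the shifts are organized, verifying the coefficients $(q^n;q^{-1})_k(-1)^k$ and the $x/1$ alternation should reduce to a routine induction on $n$ or a direct $q$-binomial-theorem expansion, which is why the authors call the proof \emph{routine}.
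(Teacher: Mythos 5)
Your proposal never writes down the one equation that actually proves this Proposition, and the mechanism you place at its center would not produce the stated identity. Every term in the Proposition is evaluated at the \emph{same} argument $y$, and the whole identity is linear in $y$. But each application of the Lemma~\ref{lem:yq}-type relation $h_n(x,y;q)=h_n(x,yq;q)-y(1-q^n)h_{n-1}(x,yq;q)$ carries a factor of $y$, so ``telescoping the $yq^j$ arguments back to $y$'' amounts to using the inverse expansion $h_n(x,yq;q)=\sum_{k\ge0}y^k(q^n;q^{-1})_k\,h_{n-k}(x,y;q)$, whose coefficients contain powers $y^k$ that simply do not occur in the target. Consequently your heuristic for where $(q^n;q^{-1})_k$ comes from is wrong: in the actual proof these factors are nothing but the ratio $(q)_n/(q)_{n-k}$ appearing when denominators are cleared, and no iteration in the $y$-argument occurs at all.

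The routine proof (the paper omits it, calling it routine) is a direct coefficient comparison in the $t\mapsto qt$ equation, which you only gesture at in your last paragraph. From \eqref{eq:hn}, $G(t):=\sum_{n\ge0}h_n(x,y;q)\,t^n/(q)_n$ satisfies $(1-xt)\,G(t)=(1-t^2)(1-yt)\,G(qt)$, equivalently
\[
\frac{G(t)-G(qt)}{t}+y\,G(qt)=\frac{x-t}{1-t^2}\,G(t).
\]
Since $G(t)-G(qt)=\sum_{n\ge1}h_n t^n/(q)_{n-1}$, the coefficient of $t^n$ on the left is $\bigl(h_{n+1}+yq^nh_n\bigr)/(q)_n$; on the right it is $\sum_{k=0}^n\varepsilon_k\,h_{n-k}/(q)_{n-k}$, where $\varepsilon_k$, the coefficient of $t^k$ in the geometric-series expansion of $(x-t)/(1-t^2)$, equals $x$ for even $k$ and $-1$ for odd $k$ --- this expansion, not an interaction of infinite products, is the source of your even/odd split. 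Multiplying by $(q)_n$ and using $(q)_n/(q)_{n-k}=(q^n;q^{-1})_k$ gives the Proposition verbatim, with no induction and no telescoping. If you insist on a telescoping argument, the correct object to iterate is Theorem~\ref{thm:4term} at fixed $y$: substitute repeatedly for the trailing term $yq^{n-2}(1-q^n)(1-q^{n-1})h_{n-2}$ and simplify with $q^{m-1}(1-q^m)+(1-q^m)(1-q^{m-1})=(1-q^m)$; that works, but it telescopes in the index $n$, not in the $y$-shifts.
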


We can also consider discrete $q$-Hermite II polynomials.  The
\emph{discrete $q$-Hermite II polynomials} $\HT_n(x,y;q)$ have the
generating function
\[
\sum_{n=0}^\infty \frac{q^{\binom n2} \HT_n(x;q)}{(q)_n} t^n =
\frac{(-xt)_\infty }{(-t^2;q^2)_\infty}.
\]
We define the \emph{discrete big $q$-Hermite II polynomials} $\HT_n(x,y;q)$ by
\[
\sum_{n=0}^\infty \HT_n(x,y;q) \frac{q^{\binom n2} t^n}{(q;q)_n}
= \frac{1}{(-t^2;q^2)_\infty} \frac{(-xt;q)_\infty}{(-yt;q)_\infty}.
\]
Then $\HT_n(x,0|q)$ is the discrete $q$-Hermite II polynomial.

The following proposition is straightforward to check.
\begin{prop} For $n\ge 0$, we have
\[
\HT_n(x,y;q) = i^{-n} h_n(ix, iy;q^{-1}).
\]
\end{prop}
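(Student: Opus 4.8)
The plan is to prove the identity by showing that $g_n(x,y) := i^{-n} h_n(ix,iy;q^{-1})$ satisfies the same four-term recurrence and initial data as $\HT_n(x,y;q)$. Recall from Example~\ref{ex:dqh2} that $\HT_n(x,y;q) = P^{(2)}_n(x,y)$ obeys
\[
\HT_{n+1}(x,y;q) = (x-yq^{-n}) \HT_n (x,y;q) -q^{-2n+1}(1-q^n) \HT_{n-1}(x,y;q) -yq^{3-3n}(1-q^n)(1-q^{n-1})\HT_{n-2}(x,y;q),
\]
with $\HT_{-1}=\HT_{-2}=0$ and $\HT_0=1$. Since this is a four-term recurrence of the stated form, the whole sequence is determined by these data, so it suffices to check that $g_n$ obeys the identical recurrence.

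First I would take the four-term recurrence for $h_n(x,y;q)$ from Theorem~\ref{thm:4term}, substitute $q\mapsto q^{-1}$, $x\mapsto ix$, $y\mapsto iy$, and multiply through by $i^{-(n+1)}$. Writing $h_{n-j}(ix,iy;q^{-1}) = i^{n-j}g_{n-j}$, each term acquires a compensating power of $i$: the coefficient of $g_n$ collects $i^{-(n+1)}\cdot i\cdot i^{n}=1$, while the coefficients of $g_{n-1}$ and $g_{n-2}$ each collect $i^{-2}=-1$. The remaining work is to match the $q$-coefficients, and the only facts needed are $i^{-2}=-1$ together with the elementary inversion $1-q^{-m} = -q^{-m}(1-q^m)$. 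For the $g_{n-1}$ term this turns $q^{-(n-1)}(1-q^{-n})$ into $-q^{-2n+1}(1-q^n)$; for the $g_{n-2}$ term, using $(1-q^{-n})(1-q^{-(n-1)}) = q^{-2n+1}(1-q^n)(1-q^{n-1})$, the coefficient becomes $-yq^{3-3n}(1-q^n)(1-q^{n-1})$. These are exactly the coefficients in the $\HT$ recurrence, and the $g_n$ term is $(x-yq^{-n})g_n$, so the recurrences coincide.

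To finish, I would verify the initial data: from $h_0=1$ and $h_1(x,y;q)=x-y$ (read off from Theorem~\ref{thm:4term} at $n=0$) one gets $g_0 = 1$ and $g_1 = i^{-1}(ix-iy)=x-y$, matching $\HT_0=1$ and $\HT_1=x-y$, while $h_{-1}=h_{-2}=0$ gives $g_{-1}=g_{-2}=0$. Hence $g_n=\HT_n$ for all $n$. I expect the only delicate point to be the sign bookkeeping in the $i$-powers and the $q\mapsto q^{-1}$ inversions; once $1-q^{-m}=-q^{-m}(1-q^m)$ is applied consistently, everything collapses. A conceptually cleaner but analytically touchier alternative would be to substitute directly into the defining generating functions and invert the $q$-shifted factorials via $(a;q^{-1})_n = (-a)^n q^{-\binom{n}{2}}(a^{-1};q)_n$; I would avoid this route as the primary argument because the base-$q^{-1}$ infinite products converge only formally for $|q|<1$, whereas the recurrence comparison is unconditional.
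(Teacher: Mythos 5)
Your proof is correct, but it is not the paper's argument, because the paper gives no argument at all: it dismisses the proposition as ``straightforward to check,'' which in context means a direct verification on the defining generating functions or on the explicit finite sums. You instead characterize both sides by a four-term recurrence plus initial data: the recurrence for $h_n(x,y;q)$ comes from Theorem~\ref{thm:4term}, the one for $\HT_n(x,y;q)$ from Example~\ref{ex:dqh2}, and your bookkeeping with $i^{-2}=-1$ and $1-q^{-m}=-q^{-m}(1-q^m)$ matches the coefficients exactly; with $g_{-1}=g_{-2}=0$ and $g_0=1$ the induction closes, so the proof stands. Two remarks on the trade-off. First, your argument inherits whatever justifies the recurrence quoted in Example~\ref{ex:dqh2}: in the paper that recurrence rests on Theorem~\ref{thm:mainflip}, which the authors state explicitly \emph{without} proof, so your proof is only as self-contained as that example (there is no circularity, since the identification of $P^{(2)}_n$ with $\HT_n$ is definitional, but the dependency is worth flagging). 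Second, your stated reason for demoting the generating-function route --- convergence of the base-$q^{-1}$ infinite products --- is not a genuine obstruction: the equality of coefficients of $t^n$ is a rational-function identity in $q$, so it suffices to verify it for $|q|>1$, where the products converge, or to bypass products entirely by transforming the explicit formula for $h_n(x,y;q)$ given after Definition~\ref{defn:big} term by term. So the direct check you set aside is just as rigorous and is presumably what the authors had in mind; your recurrence argument is a clean alternative that makes visible \emph{why} the factor $i^{-n}$ and the inversion $q\mapsto q^{-1}$ intertwine the two recurrences.
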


\section{Combinatorics of the discrete big $q$-Hermite polynomials}
\label{sec:comb-discr-big}

In this section we give some combinatorial information about the 
discrete big $q$-Hermite polynomials. This 
includes a combinatorial interpretation of the polynomials
(Theorem~\ref{thm:comb}), and a combinatorial proof of the 
4-term recurrence relation. Viennot's interpretation of the moments 
as weighted generalized Motzkin paths is 
also considered.

For the purpose of studying $h_n(x,y;q)$ combinatorially we will consider the
following rescaled continuous big $q$-Hermite polynomials $h^*_n(x,y;q)$:
\[
h^*_n(x,y;q) = (1-q)^{-n/2} h_n(x\sqrt{1-q} ,y\sqrt{1-q}|q).
\]
By \eqref{eq:hn} we have
\begin{equation}
\label{eq:hn*}
h^*_n(x,y;q) = \sum_{k=0}^{\flr{n/2}} 
(-1)^k  q^{2\binom k2} [2k-1]_q!! \qbinom{n}{2k} 
x^{n-2k} (y/x;q)_{n-2k}.
\end{equation}

Because $h^*_n(x,y;1) = H_n(x-y),$ which is a generating function for bicolored 
matchings of  $[n]:=\{1,2,\dots,n\},$ we need to consider $q$-statistics on matchings.

A \emph{matching} of $[n]=\{1,2,\dots,n\}$ is a set partition of $[n]$ in which
every block is of size 1 or 2. A block of a matching is called a \emph{fixed
  point} if its size is $1$, and an \emph{edge} if its size is 2. When we write
an edge $\{u,v\}$ we will always assume that $u<v$. A \emph{fixed point
  bi-colored matching} or \emph{FB-matching} is a matching for which every fixed
point is colored with $x$ or $y$. Let $\fbm(n)$ be the set of FB-matchings of
$[n]$.

Let $\pi\in \fbm(n)$. A \emph{crossing} of $\pi$ is a pair of two edges
$\{a,b\}$ and $\{c,d\}$ such that $a<c<b<d$. A \emph{nesting} of $\pi$ is a pair
of two edges $\{a,b\}$ and $\{c,d\}$ such that $a<c<d<b$. An \emph{alignment} of
$\pi$ is a pair of two edges $\{a,b\}$ and $\{c,d\}$ such that $a<b<c<d$.  The
\emph{block-word} $\bw(\pi)$ of $\pi$ is the word $w_1w_2\dots w_n$ such that
$w_i = 1$ if $i$ is a fixed point and $w_i=0$ otherwise. An \emph{inversion} of
a word $w_1w_2\dots w_n$ is a pair of integers $i<j$ such that $w_i>w_j$. The
number of inversions of $w$ is denoted by $\inv(w)$.  

Suppose that $\pi$ has $k$ edges and $n-2k$ fixed points. The \emph{weight}
$\wt(\pi)$ of $\pi$ is defined by
\begin{equation}
\label{eq:wt}
\wt(\pi) = (-1)^k q^{2\binom k2+2\ali(\pi)+\cro(\pi) + \inv(\bw(\pi))}
z_1z_2\dots z_{n-2k},
\end{equation}
where $z_i=x$ if the $i$th fixed point is colored with $x$, and $z_i=-yq^{i-1}$
if the $i$th fixed point is colored with $y$.

A \emph{complete matching} is a matching without fixed points.  Let $\CM(2n)$
denote the set of complete matchings of $[2n]$.

\begin{prop}
We have
\[
\sum_{\pi\in\CM(2n)} q^{2\ali(\pi)+\cro(\pi)} = [2n-1]_q!!.
\]
\end{prop}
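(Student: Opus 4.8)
The plan is to prove the identity
\[
\sum_{\pi\in\CM(2n)} q^{2\ali(\pi)+\cro(\pi)} = [2n-1]_q!!
\]
by induction on $n$, building up a complete matching of $[2n]$ from a complete matching of $[2n-2]$ one edge at a time, and tracking exactly how the statistic $2\ali(\pi)+\cro(\pi)$ changes. Recall that $[2n-1]_q!! = [1]_q[3]_q[5]_q\cdots[2n-1]_q$, so the inductive step must show that the generating function over the ways to insert the new pair contributes precisely a factor of $[2n-1]_q$.

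First I would set up the recursive construction. Given $\pi'\in\CM(2n-2)$, I form $\pi\in\CM(2n)$ by deciding which element $j\in\{1,\dots,2n\}$ is matched to the largest element $2n$; then the remaining $2n-2$ elements, relabeled in order, carry a complete matching that I identify with $\pi'$. The edge $\{j,2n\}$ is the ``outermost-closing'' edge, so the key computation is to determine the increment to $2\ali(\pi)+\cro(\pi)$ coming from the interaction of $\{j,2n\}$ with each of the $n-1$ edges of $\pi'$. An edge $e$ of $\pi'$ lies entirely to the left of $j$, straddles $j$, or lies entirely to the right of $j$ (but left of $2n$, since $2n$ is maximal). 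I would check that an edge left of $j$ forms an alignment with $\{j,2n\}$ (contributing $q^2$), a straddling edge forms a crossing (contributing $q^1$), and an edge strictly between $j$ and $2n$ forms a nesting (contributing $q^0$); the point is that as $j$ ranges over its $2n-1$ admissible positions, the total exponent increment runs over $\{0,1,2,\dots,2n-2\}$, and summing $q^{(\text{increment})}$ gives exactly $[2n-1]_q$.

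Once the single-edge increment is established, the induction closes immediately:
\[
\sum_{\pi\in\CM(2n)} q^{2\ali(\pi)+\cro(\pi)}
= [2n-1]_q \sum_{\pi'\in\CM(2n-2)} q^{2\ali(\pi')+\cro(\pi')}
= [2n-1]_q\,[2n-3]_q!! = [2n-1]_q!!,
\]
with the base case $n=0$ (the empty matching) giving the empty product $1$. The main obstacle I anticipate is the bookkeeping in the inductive step: I must verify that inserting $\{j,2n\}$ does \emph{not} disturb the statistic value of the sub-matching $\pi'$ on the other $2n-2$ points — that is, that $2\ali+\cro$ restricted to pairs of edges both distinct from $\{j,2n\}$ equals the value computed in $\pi'$ after relabeling. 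This requires confirming that relabeling the ground set order-isomorphically preserves each of the relations $a<c<b<d$, $a<c<d<b$, and $a<b<c<d$, which is clear but must be stated, and then showing the clean bijective correspondence between the position of $j$ and the three edge-types so that the increments are exactly $\{0,1,\dots,2n-2\}$ with no repetitions or gaps.
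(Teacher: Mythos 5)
Your proposal is correct, but it takes a genuinely different route from the paper. The paper does not induct at all: it quotes the classical facts that $\sum_{\pi\in\CM(2n)} q^{\cro(\pi)+2\nes(\pi)} = \sum_{\pi\in\CM(2n)} q^{2\cro(\pi)+\nes(\pi)} = [2n-1]_q!!$, observes that every pair of edges is exactly one of an alignment, crossing, or nesting, so $\ali(\pi)+\cro(\pi)+\nes(\pi)=\binom n2$, and hence $2\ali(\pi)+\cro(\pi) = 2\binom n2-\left(2\nes(\pi)+\cro(\pi)\right)$; the sum then equals $q^{2\binom n2}[2n-1]_{q^{-1}}!! = [2n-1]_q!!$, a three-line complementation argument. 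Your induction on $n$, removing the edge $\{j,2n\}$ containing the maximal element, is instead a direct, self-contained proof (it is essentially how the classical distributions the paper cites are themselves established). Its kernel, which you should make explicit in a write-up, is that the increment to $2\ali+\cro$ contributed by $\{j,2n\}$ is $2L+S$, where $L$ is the number of edges of $\pi'$ with both endpoints below $j$ and $S$ the number straddling $j$; since every point below $j$ lies on exactly one such edge, $2L+S=j-1$, which is \emph{independent of} $\pi'$ --- this is precisely why the sum factors as $[2n-1]_q$ times the inductive term, with the increments $0,1,\dots,2n-2$ each occurring once as $j$ varies. Your approach buys self-containedness (no appeal to known crossing/nesting results) at the cost of bookkeeping; the paper's buys brevity at the cost of invoking outside results.
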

\begin{proof}
  It is known that 
\[
\sum_{\pi\in\CM(2n)} q^{\cro(\pi)+2\nes(\pi)} = 
\sum_{\pi\in\CM(2n)} q^{2\cro(\pi)+\nes(\pi)} = [2n-1]_q!!.
\]
Since a pair of two edges is either an alignment, a crossing, or a nesting we
have $\ali(\pi)+\nes(\pi)+\cro(\pi)=\binom n2$. Thus
\[
\sum_{\pi\in\CM(2n)} q^{2\ali(\pi)+\cro(\pi)} = 
q^{2\binom n2}\sum_{\pi\in\CM(2n)} q^{-2\nes(\pi)-\cro(\pi)} = 
q^{2\binom n2} [2n-1]_{q^{-1}}!! = [2n-1]_q!!. 
\]
\end{proof}

\begin{thm}
\label{thm:comb}
We have
\[
h^*_n(x,y;q) = \sum_{\pi\in\fbm(n)} \wt(\pi).
\]
\end{thm}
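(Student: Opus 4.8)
The plan is to prove the combinatorial identity $h^*_n(x,y;q) = \sum_{\pi\in\fbm(n)} \wt(\pi)$ by showing that the right-hand side, after grouping FB-matchings according to their number of edges, reproduces each term of the explicit formula \eqref{eq:hn*}. First I would fix $k$ and restrict to those $\pi\in\fbm(n)$ having exactly $k$ edges and $n-2k$ fixed points, since the sign $(-1)^k$ and the factor $q^{2\binom{k}{2}}$ in \eqref{eq:wt} match the corresponding factors in \eqref{eq:hn*} term-by-term. The goal is then to show that summing $q^{2\ali(\pi)+\cro(\pi)+\inv(\bw(\pi))} z_1\cdots z_{n-2k}$ over this sub-collection equals $[2k-1]_q!! \qbinom{n}{2k} x^{n-2k}(y/x;q)_{n-2k}$.

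The key structural observation is that an FB-matching with $k$ edges decomposes into three independent pieces of data: the choice of which $2k$ positions are covered by edges (versus which $n-2k$ are fixed points), the complete matching structure on the chosen edge-positions, and the $x/y$-coloring of the fixed points. I would argue that the $q$-statistics factor accordingly. The inversion statistic $\inv(\bw(\pi))$ depends only on the set of edge-positions relative to the fixed-point positions; summing $q^{\inv(\bw(\pi))}$ over all $\binom{n}{2k}$ ways of interleaving the two block-types produces exactly $\qbinom{n}{2k}$ by the standard $q$-binomial interpretation as the generating function for inversions of $0/1$-words. Next, the alignment and crossing statistics $2\ali(\pi)+\cro(\pi)$ depend only on the relative order of the edges among themselves, so summing $q^{2\ali(\pi)+\cro(\pi)}$ over the complete matchings on the $2k$ edge-positions gives $[2k-1]_q!!$ by the Proposition just proved. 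The subtle point is justifying that these statistics genuinely separate — that a crossing, nesting, or alignment between two edges is unaffected by how fixed points are interleaved, and that no cross-terms between the interleaving and the internal edge-order arise. I would verify this by noting that $\ali$, $\cro$, $\nes$ are defined purely by the relative order of the four endpoints of two edges, which is preserved under insertion of fixed points.

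Finally, I would show the coloring sum produces $x^{n-2k}(y/x;q)_{n-2k}$. With the $n-2k$ fixed points occurring in positions indexed $1,\dots,n-2k$ (in their internal order), each is independently weighted $z_i = x$ or $z_i = -yq^{i-1}$, so the total coloring contribution is $\prod_{i=1}^{n-2k}(x - yq^{i-1})$. Pulling out $x$ from each factor gives $x^{n-2k}\prod_{i=1}^{n-2k}(1-(y/x)q^{i-1}) = x^{n-2k}(y/x;q)_{n-2k}$, which is exactly the $x$-dependent factor in \eqref{eq:hn*}. Combining the three factored sums yields the coefficient $(-1)^k q^{2\binom{k}{2}}[2k-1]_q!!\qbinom{n}{2k}x^{n-2k}(y/x;q)_{n-2k}$, matching \eqref{eq:hn*}; summing over $k$ from $0$ to $\flr{n/2}$ completes the proof.

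\medskip

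\emph{The main obstacle} I anticipate is rigorously establishing the clean factorization of the combined statistic $2\ali(\pi)+\cro(\pi)+\inv(\bw(\pi))$ into an edge-order part and an interleaving part. One must check there is no interaction term: inserting a fixed point between the endpoints of an edge does not create or destroy any alignment/crossing/nesting, and conversely the interleaving inversions are counted solely by $\inv(\bw)$. A careful way to see this is to observe that $\bw(\pi)$ records only the $0/1$ pattern of block types while the edge-statistics see only the sub-permutation of edge-endpoints; the weight is thus a product over two orthogonal combinatorial coordinates, which is precisely what allows the sum to split as $\qbinom{n}{2k}\cdot[2k-1]_q!!$ times the coloring product.
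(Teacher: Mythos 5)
Your proposal is correct and takes essentially the same approach as the paper: the paper formalizes your three-way decomposition as a bijection $\pi\mapsto(k,\bw(\pi),\sigma,Z)$ onto $4$-tuples, checks exactly the factorization of the statistic $2\ali(\pi)+\cro(\pi)+\inv(\bw(\pi))$ that you identify as the key point, and then evaluates the same three independent sums, namely $\qbinom{n}{2k}$ from inversions of the block word, $[2k-1]_q!!$ from the induced complete matching via the preceding Proposition, and $x^{n-2k}(y/x;q)_{n-2k}$ from the colorings, before comparing with \eqref{eq:hn*}.
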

\begin{proof}
Let $M(n)$ be the set of 4-tuples $(k,w,\sigma,X)$ such that $0\le k\le
\flr{n/2}$, $w$ is a word of length $n$ consisting of $k$ 0's and $n-2k$ 1's,
$\sigma\in \CM(2k)$, and $Z=(z_1,z_2,\dots,z_{n-2k})$ is a sequence such that
$z_i$ is either $x$ or $-yq^{i-1}$ for each $i$.

For $\pi\in\fbm(n)$ we define $g(\pi)$ to be the 4-tuple $(k,w,\sigma,Z)\in
M(n)$, where $k$ is the number of edges of $\pi$, $w=\bw(\pi)$, $\sigma$ is
the induced complete matching of $\pi$, and $Z=(z_1,z_2,\dots, z_{n-2k})$ is
the sequence such that $z_i=x$ if the $i$th fixed point is colored with $x$,
and $z_i=-yq^{i-1}$ if the $i$th fixed point is colored with $y$. Here, the
\emph{induced complete matching} of $\pi$ is the complete matching of $[2k]$
for which $i$ and $j$ form an edge if and only if the $i$th non-fixed point
and the $j$th non-fixed point of $\pi$ form an edge.

It is easy to see that $g$ is a bijection from $\fbm(n)$ to $M(n)$ such that
if $g(\pi)=(k,w,\sigma,Z)$ with $Z=(z_1,z_2,\cdots,z_{n-2k})$ then
\[
\wt(\pi) = (-1)^k q^{2\binom k2} q^{2\ali(\sigma)+\cro(\sigma)} q^{\inv(w)}
z_1z_2\cdots z_{n-2k}.
\]
Thus
\begin{align*}
\sum_{\pi\in\fbm(n)} \wt(\pi) &=  
\sum_{(k,w,\sigma,Z)\in M(n)} (-1)^k q^{\binom k2} 
q^{2\ali(\sigma)+\cro(\sigma)} q^{\inv(w)}z_1z_2\cdots z_{n-2k}.
\end{align*}
Here once $k$ is fixed $\sigma $ can be any complete matching of $[2k]$, 
$w$ can be any word consisting of $k$ 0's and $n-2k$ 1's, and
for $Z=(z_1,z_2,\cdots,z_{n-2k})$ each $z_i$ can be either $x$ or
$-yq^{i-1}$. Thus the sum of $q^{2\ali(\sigma)+\cro(\sigma)}$ for all such
  $\sigma$'s gives $[2k-1]_q!!$, the sum of $\inv(w)$ for all such $w$ gives
$\qbinom n{2k}$, the sum of $z_1z_2\cdots z_{n-2k}$ for all such $Z$ gives
$(x/y)_{n-2k}$. This finishes the proof.  
\end{proof}

\begin{prop}
\label{prop:4-term*}
For $n\ge0$, we have
\[
h^*_{n+1} = (x-yq^n) h^*_n - q^{n-1}[n]_q h^*_{n-1} 
+y q^{n-2}[n-1]_q(1-q^n) h^*_{n-2}. 
\]
\end{prop}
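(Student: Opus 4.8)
**Proposal for proving Proposition~\ref{prop:4-term*}.**

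The plan is to derive the recurrence for the rescaled polynomials $h^*_n(x,y;q)$ directly from the known $4$-term recurrence for $h_n(x,y;q)$ in Theorem~\ref{thm:4term}, which has already been established in Example~\ref{ex:dqh1}. Since $h^*_n$ is obtained from $h_n$ by the substitution $x\mapsto x\sqrt{1-q}$, $y\mapsto y\sqrt{1-q}$ together with the prefactor $(1-q)^{-n/2}$, the entire task reduces to tracking how each term of the recurrence transforms under this rescaling. First I would write down the recurrence of Theorem~\ref{thm:4term} with $x$ and $y$ replaced by $x\sqrt{1-q}$ and $y\sqrt{1-q}$:
\[
h_{n+1}(x\sqrt{1-q},y\sqrt{1-q};q) = (x\sqrt{1-q}-yq^n\sqrt{1-q}) h_n(\cdots) -q^{n-1}(1-q^n)h_{n-1}(\cdots)+yq^{n-2}\sqrt{1-q}(1-q^n)(1-q^{n-1})h_{n-2}(\cdots).
\]

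Next I would multiply through by the appropriate normalizing power of $(1-q)$ so that each $h_j(\cdots)$ becomes $h^*_j$. Concretely, I would multiply both sides by $(1-q)^{-(n+1)/2}$ and regroup, using $h^*_j=(1-q)^{-j/2}h_j(x\sqrt{1-q},y\sqrt{1-q};q)$ to convert each term. The coefficient of $h^*_n$ carries a factor $(1-q)^{-(n+1)/2}\cdot\sqrt{1-q}\cdot(1-q)^{n/2}=1$, so it becomes exactly $(x-yq^n)$; the coefficient of $h^*_{n-1}$ picks up $(1-q)^{-(n+1)/2}\cdot(1-q)^{(n-1)/2}=(1-q)^{-1}$, turning $q^{n-1}(1-q^n)$ into $q^{n-1}(1-q^n)/(1-q)=q^{n-1}[n]_q$; and the coefficient of $h^*_{n-2}$ picks up $(1-q)^{-(n+1)/2}\cdot\sqrt{1-q}\cdot(1-q)^{(n-2)/2}=(1-q)^{-1}$, so $yq^{n-2}(1-q^n)(1-q^{n-1})$ becomes $yq^{n-2}(1-q^{n-1})(1-q^n)/(1-q)=yq^{n-2}[n-1]_q(1-q^n)$. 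Collecting these three transformed terms yields precisely the claimed identity.

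The only genuinely delicate point is the bookkeeping of the half-integer powers of $(1-q)$: the $\sqrt{1-q}$ contributed by the rescaling of $x$ and $y$ in the first and third terms must combine correctly with the integer powers from the $h_j\mapsto h^*_j$ conversion so that the final coefficients are honest polynomials in $q$ (with the expected $[n]_q$ and $[n-1]_q$ factors) and contain no stray radicals. I expect this to be the main obstacle, though it is purely a matter of careful exponent arithmetic rather than any conceptual difficulty. As an independent check, one could verify the result combinatorially: by Theorem~\ref{thm:comb} the polynomial $h^*_n$ counts FB-matchings weighted by $\wt$, and the recurrence should correspond to removing the largest element $n+1$ of $[n+1]$, which is either a fixed point (contributing the $x$ or $-yq^n$ term) or the right endpoint of an edge whose left endpoint is another fixed point or vertex; this combinatorial decomposition would reproduce the same three coefficients and confirm the algebraic computation.
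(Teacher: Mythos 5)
Your proof is correct, but it takes a genuinely different route from the paper's. The paper proves Proposition~\ref{prop:4-term*} \emph{combinatorially}: using Theorem~\ref{thm:comb} it writes $h^*_{n+1}=W_x(n+1)+W_y(n+1)+W_-(n+1)$, splitting the sum over FB-matchings of $[n+1]$ according to whether $n+1$ is a fixed point colored $x$, a fixed point colored $y$, or an endpoint of an edge, and then establishes three identities for these partial sums by analyzing how deleting $n+1$ changes the statistic $2\ali(\pi)+\cro(\pi)+\inv(\bw(\pi))$; the $y$-colored case needs a delicate two-case argument (the weight of a $y$-fixed point depends on its index among the fixed points, so removing $n+1$ forces a nontrivial re-indexing bijection). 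You instead deduce the identity by pure rescaling from Theorem~\ref{thm:4term}, which the paper has already proved independently via the generating-function machinery of Example~\ref{ex:dqh1}; this is logically valid and non-circular, and your exponent bookkeeping is right: the coefficients of $h^*_n$, $h^*_{n-1}$, $h^*_{n-2}$ pick up factors $1$, $(1-q)^{-1}$, $(1-q)^{-1}$ respectively, producing $(x-yq^n)$, $q^{n-1}[n]_q$, and $yq^{n-2}[n-1]_q(1-q^n)$ exactly as claimed. What your route buys is brevity — it is essentially trivial given Theorem~\ref{thm:4term}. What the paper's route buys is the stated goal of Section~\ref{sec:comb-discr-big}: a self-contained combinatorial proof of the 4-term recurrence, which (running your rescaling in reverse) also yields a second, bijective proof of Theorem~\ref{thm:4term} and attaches combinatorial meaning to each coefficient. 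One caveat: your closing remark that the combinatorial check would be routine — ``removing the largest element, which is either a fixed point or the right endpoint of an edge'' — understates the real difficulty there; the $x$-fixed-point and edge cases are as easy as you suggest, but the $y$-fixed-point case is where the paper has to work, and is the reason its proof is not a one-liner.
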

\begin{proof}[Proof of Proposition~\ref{prop:4-term*}]
  Let $W_-(n)$ be the sum of $\wt(\pi)$ for all $\pi\in\fbm(n)$ such that $n$ is
  not a fixed point. Let $W_x(n)$ (respectively $W_y(n)$) be the sum of
  $\wt(\pi)$ for all $\pi\in\fbm(n)$ such that $n$ is a fixed point colored with
  $x$ (respectively $y$). Then
\[
h^*_{n+1}(x,y;q) = 
\sum_{\pi\in\fbm(n)} \wt(\pi) = W_-(n+1) + W_x(n+1) + W_y(n+1).
\]
We claim that 
\begin{align}
  \label{eq:c1}
W_x(n+1) &= x h^*_{n}(x,y;q),  \\
  \label{eq:c2}
W_y(n+1) &= -yq^n (W_x(n)+W_y(n)) - yW_-(n),\\
  \label{eq:c3}
W_-(n+1) &= -q^{n-1}[n]_q h^*_{n-1}(x,y;q).
\end{align}

From \eqref{eq:wt} we easily get \eqref{eq:c1}.

For \eqref{eq:c3}, consider a matching $\pi\in\fbm(n+1)$ such that $n+1$ is
connected with $i$ where $1\le i\le n$. Suppose that $\pi$ has $k$ edges and
$n+1-2k$ fixed points. Let us compute the contribution of an edge of a fixed
point together with the edge $\{i,n+1\}$ to $2\ali(\pi)+\cro(\pi) +
\inv(\bw(\pi))$. An edge with two integers less than $i$ contributes $2$ to
$2\ali(\pi)$. An edge with exactly one integer less than $i$ contributes $1$ to
$\cro(\pi)$. An edge with two integers greater than $i$ contributes nothing.
Each fixed point of $\pi$ less than $i$ contributes $2$ to $\inv(\bw(\pi))$
together with the edge $\{i,n+1\}$.  Each fixed point of $\pi$ greater than $i$
contributes $1$ to $\inv(\bw(\pi))$ together with the edge $\{i,n+1\}$.  Thus
the contribution of the edge $\{i,n+1\}$ to $2\ali(\pi)+\cro(\pi) +
\inv(\bw(\pi))$ is equal to $i-1 + (n+1-2k)$. Let $\sigma$ be the matching
obtained from $\pi$ by removing the edge $\{i,n+1\}$. Then
\[
2\ali(\pi)+\cro(\pi) + \inv(\bw(\pi)) = 
2\ali(\sigma)+\cro(\sigma) + \inv(\bw(\sigma)) 
+i-1 + (n+1-2k).
\]
Thus, using \eqref{eq:wt}, the above identity and $2\binom k2 =
2\binom{k-1}2+2k-2$, we have $\wt(\pi) = -q^{n-1} q^{i-1} \wt(\sigma)$.  Since
$i$ can be any integer from $1$ to $n$ and $\sigma\in\fbm(n-1)$ we get
\eqref{eq:c3}.

Now we prove \eqref{eq:c2}. Consider a matching $\pi\in\fbm(n+1)$ such that
$n+1$ is a fixed point colored with $y$. Suppose that $\pi$ has $k$ edges with
$2k$ non-fixed points $b_1<b_2<\dots<b_{2k}$. For $0\le i\le 2k+1$, let $a_i =
b_i-b_{i-1}-1$, where $b_0=0$ and $b_{2k+1}=n$. Then
$a_0+a_1+\cdots+a_{2k+1}=n-2k$. Let $\sigma$ be the matching obtained from $\pi$
by removing $n+1$. Then we have $\wt(\pi) = -yq^{n-2k}\wt(\sigma)$. We consider
two cases.

Case 1: $a_0\ne 0$. Let $\tau$ be the matching obtained from $\sigma$ by
changing $1$ into $n$ and decreasing the other integers by $1$. We color the
$i$th fixed point of $\tau$ with the same color of the $i$th fixed point of
$\sigma$. Then $\wt(\sigma) = q^{2k} \wt(\tau)$ and
$\wt(\pi)=-yq^n(\tau)$. Since $n$ is a fixed point in $\tau$ the sum of
$\wt(\pi)$ in this case gives $-yq^n (W_x(n)+W_y(n))$.

Case 2: $a_0=0$. Note that 
\[
\bw(\sigma)=0
\overbrace{1\cdots 1}^{a_1}0
\overbrace{1\cdots 1}^{a_2}0 \cdots
0\overbrace{1\dots 1}^{a_{2k}}
0\overbrace{1\dots 1}^{a_{2k+1}}.
\]
We define $\tau$ to be
the matching with
\[
\bw(\tau)=\overbrace{1\cdots 1}^{a_1}0
\overbrace{1\cdots 1}^{a_2}0
\overbrace{1\cdots 1}^{a_3}1 \cdots
0\overbrace{1\dots 1}^{a_{2k+1}}0
\]
and the $i$th fixed point of $\tau$ is colored with the same color of the $i$th
fixed point of $\sigma$. Then $\wt(\sigma) = q^{-n+2k}\wt(\tau)$ and $\wt(\pi) =
-y\wt(\tau)$.  Since $n$ is a non-fixed point in $\tau$, the sum of $\wt(\pi)$
in this case gives $- yW_-(n)$.

It is easy to see that \eqref{eq:c1}, \eqref{eq:c2}, and \eqref{eq:c3} implies
the 4-term recurrence relation.
\end{proof}

Since the polynomials $h_n(x,y;q)$ satisfy a 4-term recurrence relation, they are 2-fold
multiple orthogonal polynomials in $x$. By Viennot's theory, we can express the
two moments $\LL^{(0)}(x^n)$ and $\LL^{(1)}(x^n)$ as a sum of weights of certain
lattice paths.

A \emph{2-Motzkin path} is a lattice path consisting of an up step $(1,1)$, a
horizontal step $(1,0)$, a down step $(1,-1)$, and a double down step $(1,-2)$,
which starts at the origin and never goes below the $x$-axis. 

For $i=0,1$ let $\Mot_i(n)$ denote the set of 2-Motzkin paths of length $n$ with
final height $i$. The \emph{weight} of $M\in\Mot_i(n)$ is the product of weights
of all steps, where the weight of each step is defined as follows.
\begin{itemize}
\item An up step has weight $1$.
\item A horizontal step starting at level $i$ has weight $yq^i$.
\item A down step starting at level $i$ has weight $q^{i-1}(1-q^i)$.
\item A double down step starting at level $i$ has weight $-yq^{i-2}(1-q^i) (1-q^{i-1})$.
\end{itemize}

Then by Viennot's theory we have
\[
\LL_i(y^n) = \sum_{M\in\Mot_i(n)} \wt(M).
\]

Thus we obtain the following corollary from Theorem~\ref{thm:hermite_moments}.
\begin{cor} For $n\ge 0$, we have
  \begin{align*}
\sum_{M\in\Mot_0(n)} \wt(M) 
&= \sum_{k=0}^{\flr{n/2}}  \qbinom{n}{2k} (q;q^2)_k y^{n-2k},\\
\sum_{M\in\Mot_1(n)} \wt(M) 
&= (1-q^n) \sum_{k=0}^{\flr{n/2}}  \qbinom{n-1}{2k} (q;q^2)_k y^{n-2k-1}.
  \end{align*}
\end{cor}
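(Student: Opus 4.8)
The plan is to read the corollary off directly from two facts already established in the excerpt: the explicit moment formulas of Theorem~\ref{thm:hermite_moments} and the weighted-path interpretation of those moments furnished by Viennot's theory. No fresh computation is needed at the level of the corollary itself; its two identities merely assert the equality of two different expressions for the single pair of quantities $\LL^{(0)}(x^n)$ and $\LL^{(1)}(x^n)$.

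First I would pin down the path interpretation. Since $h_n(x,y;q)$ obeys the 4-term recurrence of Theorem~\ref{thm:4term}, the sequence is $2$-orthogonal, and Viennot's theory represents the moments of its two functionals as generating functions for weighted $2$-Motzkin paths, giving $\LL^{(i)}(x^n)=\sum_{M\in\Mot_i(n)}\wt(M)$ for $i\in\{0,1\}$, where $\Mot_i(n)$ consists of paths of length $n$ ending at height $i$. The only bookkeeping here is to check that the four step weights are precisely the recurrence coefficients: the horizontal weight $yq^i$ at level $i$ is $b_i$, the down weight $q^{i-1}(1-q^i)$ is $\lambda_i$, and the double-down weight $-yq^{i-2}(1-q^i)(1-q^{i-1})$ is $\nu_i$, where $\nu_n=-yq^{n-2}(1-q^n)(1-q^{n-1})$ is the coefficient in the normalized recurrence $h_{n+1}=(x-b_n)h_n-\lambda_n h_{n-1}-\nu_n h_{n-2}$ extracted from Theorem~\ref{thm:4term}. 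I would then substitute the explicit values from Theorem~\ref{thm:hermite_moments},
\[
\LL^{(0)}(x^n)=\sum_{k=0}^{\flr{n/2}}\qbinom{n}{2k}(q;q^2)_k\,y^{n-2k},\qquad
\LL^{(1)}(x^n)=(1-q^n)\sum_{k=0}^{\flr{n/2}}\qbinom{n-1}{2k}(q;q^2)_k\,y^{n-2k-1},
\]
and equate these with the path sums for $i=0$ and $i=1$, which is exactly the claim.

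The main—and essentially only—substantive obstacle sits not in the corollary but in the Viennot identity invoked above: one must justify that the moments of a $2$-orthogonal family attached to a 4-term recurrence are enumerated by $2$-Motzkin paths carrying the extra double-down step, with final height $0$ or $1$ selecting the functional $\LL^{(0)}$ or $\LL^{(1)}$. This is the multiple-orthogonal generalization of the classical continued-fraction/path expansion of moments; once it is granted, together with the matching of step weights to the coefficients of Theorem~\ref{thm:4term}, the corollary follows immediately from Theorem~\ref{thm:hermite_moments} with no further work.
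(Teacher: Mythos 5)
Your proposal is correct and follows exactly the paper's route: the paper likewise obtains the corollary immediately by combining Viennot's path interpretation of the moments of the $2$-orthogonal family $h_n(x,y;q)$ (with step weights read off from the 4-term recurrence of Theorem~\ref{thm:4term}) with the explicit moment formulas of Theorem~\ref{thm:hermite_moments}. Your extra bookkeeping identifying the horizontal, down, and double-down weights with the normalized recurrence coefficients $b_i$, $\lambda_i$, $\nu_i$ is precisely the (implicit) content of the paper's argument.
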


It would be interesting to prove the above corollary combinatorially. 

\section{An addition theorem}
\label{sec:addition_theorem}

A Hermite polynomial addition theorem is
\begin{equation}
\label{q=1}
H_n(x+y)=\sum_{k=0}^n \binom{n}{k} H_k(x/a)a^kH_{n-k}(y/b)b^{n-k}
\end{equation}
where $a^2+b^2=1$. We give a $q$-analogue of this result 
(Proposition~\ref{prop:addthm}) using the discrete big
$q$-Hermite polynomials.

We will use $h_n(x,y;q)$ as our $q$-version of $H_n(x-y)$, 
\[
\lim_{q\to1} h^*_n(x,y;q)=\lim_{q\to1} 
\frac{h_n(x\sqrt{1-q},y\sqrt{1-q};q)}{(1-q)^{n/2}}=H_n(x-y).
\]
and $h_n(x/a,0;q),$ the discrete 
$q$-Hermite, as our version of $H_n(x/a)$
\[
\lim_{q\to1} h^*_n(x,0;q)=\lim_{q\to1} 
\frac{h_n(x\sqrt{1-q},0;q)}{(1-q)^{n/2}}=H_n(x).
\]

Another $q$-version of $b^{n-k}H_{n-k}(y/b)$, $a^2+b^2=1$ is given by
$p_{n-k}(y,a;q)$, where
\[
p_{t}(y,a;q)=\sum_{m=0}^{[t/2]} \gauss{t}{2m}(q;q^2)_m a^{2m}(1/a^2;q^2)_m
y^{t-2m} q^{\binom{t-2m}{2}}.
\]
\[
\lim_{q\to1} \frac{p_t(y\sqrt{1-q},a;q)}
{(1-q)^{t/2}}
=b^tH_n(x/b).
\]

The result is
\begin{prop} 
\label{prop:addthm}
For $n\ge 0$,
\[
h_n(x,y;q)=(-1)^n\sum_{k=0}^n \gauss{n}{k} h_k(x/a,0|q) (-a)^k p_{n-k}(y,a|q).
\]
\end{prop}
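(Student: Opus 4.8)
The plan is to prove Proposition~\ref{prop:addthm} by comparing generating functions, which is the natural strategy given that every polynomial appearing has an explicit product-type generating function. First I would multiply both sides by $t^n/(q;q)_n$ and sum over $n\ge 0$, turning the claimed identity into an identity of formal power series in $t$. On the left, by Definition~\ref{defn:big}, the generating function of $h_n(x,y;q)$ is $(t^2;q^2)_\infty (yt)_\infty/(xt)_\infty$; the overall factor $(-1)^n$ suggests I should substitute $t\mapsto -t$ (or equivalently track signs carefully), so the left side becomes $(t^2;q^2)_\infty(-yt)_\infty/(-xt)_\infty$.

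The right side is a Cauchy product (a $q$-binomial convolution) of the two sequences $h_k(x/a,0;q)(-a)^k$ and $p_{n-k}(y,a;q)$. The key structural fact I would invoke is that a $q$-binomial convolution $\sum_n \bigl(\sum_k \gauss{n}{k} u_k v_{n-k}\bigr)t^n/(q)_n$ factors as the product of the two exponential-type generating functions $\bigl(\sum_k u_k t^k/(q)_k\bigr)\bigl(\sum_j v_j t^j/(q)_j\bigr)$. Thus I need the generating function of $h_k(x/a,0;q)(-a)^k$, which from the discrete $q$-Hermite~I generating function is $(a^2t^2;q^2)_\infty/(-xt)_\infty$ after the substitution $t\mapsto -at$, and the generating function of $p_{n-k}(y,a;q)$. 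For the latter I would first compute $\sum_t p_t(y,a;q)\,q^{\binom t2}t^t/(q)_t$ or the plain exponential generating function directly from the stated explicit formula for $p_t(y,a;q)$: the inner sum over $m$ involves $(q;q^2)_m a^{2m}(1/a^2;q^2)_m$, which I expect to sum (via the $q$-binomial theorem applied to $(a^2t^2;q^2)_\infty$ and $(t^2;q^2)_\infty$ type factors) to a product of the shape $(t^2;q^2)_\infty(yt)_\infty$ divided by $(a^2t^2;q^2)_\infty$, so that the $(a^2t^2;q^2)_\infty$ factors cancel against those coming from the $h_k(x/a,0;q)$ piece.

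The heart of the argument is therefore the bookkeeping identity
\[
\frac{(t^2;q^2)_\infty (-yt)_\infty}{(-xt)_\infty}
= \frac{(a^2t^2;q^2)_\infty}{(-xt)_\infty}\cdot
\frac{(t^2;q^2)_\infty (-yt)_\infty}{(a^2t^2;q^2)_\infty},
\]
where the first factor on the right is the generating function of $h_k(x/a,0;q)(-a)^k$ and the second is the generating function of $p_{n-k}(y,a;q)$; the constraint $a^2+b^2=1$ does not enter the formal identity itself but only the $q\to 1$ limit interpretation, so I would not need it here. The main obstacle I anticipate is verifying the generating function of $p_t(y,a;q)$ from its explicit double-sum definition: I must correctly identify the exponential-type generating variable (the presence of $q^{\binom{t-2m}{2}}$ signals that $p_t$ is naturally attached to a generating function weighted by $q^{\binom t2}$ rather than an ordinary one), and then resum the $m$-sum using the $q$-binomial theorem in the form $\sum_m (q;q^2)_m (1/a^2;q^2)_m a^{2m}\cdots$. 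Getting the powers of $q$, the sign conventions, and the precise Pochhammer arguments to line up so that the $(a^2t^2;q^2)_\infty$ factors cancel cleanly is the delicate step; once that resummation is confirmed, the proposition follows immediately by equating coefficients of $t^n/(q)_n$ on both sides.
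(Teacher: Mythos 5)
Your proposal is correct and takes essentially the same route as the paper: the paper proves the result by verifying the generating-function identity $G(x,y,-t)=G(x/a,0,-at)\,F(y,a,t)$, where $G(x,y,t)=(t^2;q^2)_\infty(yt)_\infty/(xt)_\infty$ and $F(y,a,t)=\sum_{n\ge0}p_n(y,a;q)\,t^n/(q)_n=(t^2;q^2)_\infty(-yt)_\infty/(a^2t^2;q^2)_\infty$, which is precisely your bookkeeping identity combined with the $q$-binomial convolution principle. The only difference is that the paper asserts the product form of $F$ without derivation, whereas you (rightly) identify the resummation of the $m$-sum via the $q$-binomial theorem as the step requiring verification --- a check that indeed goes through, yielding $(-yt)_\infty$ from the $q^{\binom{t-2m}{2}}y^{t-2m}$ factor and $(t^2;q^2)_\infty/(a^2t^2;q^2)_\infty$ from the $m$-sum.
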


\begin{proof} 
The generating function of $p_n$ is
\[
F(y,a,w)=\sum_{n=0}^\infty \frac{p_n(y,a;q)}{(q)_n} w^n=
\frac{(w^2;q^2)_\infty (-yw)_\infty}{(a^2w^2;q^2)_\infty}.
\]

If 
\[
G(x,y,t)=
\frac{(t^2;q^2)_\infty (yt)_\infty}{(xt)_\infty}
\]
is the discrete big $q$-Hermite generating function, then
\[
G(x,y,-t)= G(x/a,0,-at) F(y,a,t),
\]
which gives Proposition~\ref{prop:addthm}.
\end{proof}

\bibliographystyle{abbrv}


\end{document}